\newtheoremstyle{theorem}{1em}{1em}{\slshape}{0pt}{\bfseries}{.}{ }{}
\theoremstyle{theorem}
\newtheorem{theorem}{Theorem}
\newtheorem*{theorem*}{Theorem}
\newtheorem{corollary}[theorem]{Corollary}
\newtheorem{lemma}[theorem]{Lemma}
\newtheorem*{claim*}{Claim}
\theoremstyle{remark}
\newtheorem*{remark*}{Remark}
\newtheoremstyle{example}{1em}{1em}{}{0pt}{\bfseries}{.}{ }{}
\providecommand{\setN}{\mathbb{N}}
\providecommand{\setZ}{\mathbb{Z}}
\providecommand{\setQ}{\mathbb{Q}}
\providecommand{\setR}{\mathbb{R}}
\newcommand{\Vol}{\textrm{Vol}}
\newcommand{\E}{\mathop{\mathbb{E}}}
\newcommand{\IP}{\textsc{(IP)}\xspace}
\newcommand{\aIP}{\textsc{(Approx-IP)}\xspace}
\DeclareMathAlphabet{\pazocal}{OMS}{zplm}{m}{n}
\title{From approximate to exact integer programming\footnote{The conference version of this work was published at IPCO 2023.}}
\date{}
\author{Daniel Dadush\thanks{CWI Email: {\tt dadush@cwi.nl}. Supported by ERC Starting Grant no. 805241-QIP} \and Friedrich Eisenbrand\thanks{EPFL Email {\tt friedrich.eisenbrand@epfl.ch}. Supported by the Swiss National Science Foundation (SNSF) grant 185030 and 207365} \and Thomas Rothvoss  \thanks{University of Washington, Seattle. Email: {\tt rothvoss@uw.edu}. OrcID 0009-0007-8314-0963. Supported by NSF CAREER grant 1651861 and a David \& Lucile Packard Foundation Fellowship.}}
\begin{document}
\maketitle
\begin{abstract}
  \noindent 
Approximate integer programming is the following: For a given  convex body $K ⊆ ℝ^n$, either determine whether $K ∩ ℤ^n$ is empty, or find an integer point in the convex body $2⋅(K - c) +c$ which is $K$, scaled by $2$ from its center of gravity $c$. Approximate integer programming can be solved in time $2^{O(n)}$ while the fastest known methods for exact integer programming run in time $2^{O(n)} ⋅ n^n$. So far, there are no efficient methods for integer programming  known that are based on approximate integer programming. Our  main contribution are two such methods, each yielding novel complexity results.  
 
  First, we show that an  integer point  $x^* ∈(K ∩ℤ^n)$ can be found in time  $2^{O(n)}$, provided that the \emph{remainders} of each component $x_i^* \mod{ℓ}$ for some arbitrarily fixed $ℓ ≥ 5(n+1)$  of $x^*$ are given. The algorithm is based on a \emph{cutting-plane technique}, iteratively halving the volume of the feasible set. The cutting planes are determined via  approximate integer programming. Enumeration of the possible remainders  gives a $2^{O(n)}n^n$ algorithm for general integer programming. This matches the current best  bound of an algorithm by  Dadush (2012) that is considerably more involved. Our algorithm  also relies  on a new \emph{asymmetric approximate Cara\-théodory theorem} that might be of interest on its own. 

  Our second method  concerns integer programming problems in equation-standard form $Ax = b, 0 ≤x ≤ u, \, x ∈ℤ^n$ . Such a problem can be reduced to the solution of $∏_i O(\log u_i +1)$  approximate integer programming problems.  This implies, for example that \emph{knapsack} or  \emph{subset-sum} problems with \emph{polynomial variable range}  $0 ≤ x_i ≤ p(n)$ can be solved in time $(\log n)^{O(n)}$. For these problems, the best running time so far was  $n^n ⋅ 2^{O(n)}$. 
\end{abstract}

\section{Introduction}
\label{sec:introduction}

Many \emph{combinatorial optimization problems} as well as many problems from the \emph{algorithmic geometry of numbers} can be formulated as an \emph{integer linear program}
\begin{equation}
  \label{eq:4}
  \max\{ \left<c,x\right> \mid Ax≤b, x ∈ℤ^n\} 
\end{equation}
where $A ∈ ℤ^{m × n}, b∈ℤ^m$ and $c ∈ℤ^n$, see, e.g.~\cite{GroetschelLovaszSchrijver88,NemhauserWolsey89,Schrijver95}.  Lenstra~\cite{lenstra1983integer}  has shown that integer programming can be solved in polynomial time, if the number of variables is fixed. A careful analysis of his algorithm yields a running time of $2^{O(n^2)}$ times a polynomial in the binary encoding length of the input of the integer program. Kannan~\cite{kannan1987minkowski} has improved this to $n^{O(n)}$, where, from now on we ignore the extra factor that depends polynomially on the input length. At the time, this paper was first submitted, the best algorithm was the one of Dadush~\cite{dadush2012integer} with a running time of $2^{O(n)} ⋅ n^n$.

The question whether there exists a \emph{singly exponential time}, i.e., a  $2^{O(n)}$-algorithm for integer programming is one of the most prominent open problems in the area of algorithms and complexity.
Integer programming can be described in the following more general form. Here, a \emph{convex body}  is synonymous for a full-dimensional compact and convex set.   
  \begin{center}
    \begin{minipage}{1.0\linewidth}
      \begin{framed}
      
        \noindent {\bf Integer Programming (IP)}

        \smallskip

  \noindent 
  Given a \emph{convex body} $K ⊆ ℝ^n$, find an integer  solution $x^* ∈ K ∩ ℤ^n$ or assert that  $K ∩ℤ^n = ∅$.  
\end{framed}
\end{minipage}
\end{center}
  
%\noindent 
The convex body $K$ must be well described in the sense that there is access to a  \emph{separation oracle}, see~\cite{GroetschelLovaszSchrijver88}. 
Furthermore, one assumes that $K$ contains a ball of radius $r>0$ and that it is contained in some ball of radius $R$. In this setting,  the current best running times hold as well. The additional polynomial factor in the input encoding length becomes a polynomial factor in $\log (R/r)$ and the dimension $n$.
Central to this paper is \emph{Approximate integer programming} which is as follows. 
 \begin{center}
    \begin{minipage}{1.0\linewidth}
      \begin{framed}
      
        \noindent {\bf Approximate Integer Programming (Approx-IP)}

        \smallskip

  \noindent 

  Given a \emph{convex body} $K ⊆ ℝ^n$, let $c ∈ ℝ^n$ be its center of gravity.  Either find an integer vector  $x^* ∈ (2 ⋅ (K-c) + c)  ∩ ℤ^n$, or assert that  $K ∩ℤ^n = ∅$.  
\end{framed}
\end{minipage}
\end{center} 

The convex body $ 2 ⋅ (K-c) + c$ is $K$ scaled by a factor of $2$ from
its center of gravity. The algorithm of
Dadush~\cite{ApproximateIP-DadushAlgorithmica2014} solves approximate integer
programming in singly exponential time $2^{O(n)}$.  Despite its clear
relation to exact integer programming, there is no reduction from
exact to approximate known so far. Our guiding question is the
following: Can approximate integer programming be used to solve the
exact version of (specific) integer programming programming problems?

\subsection{Contributions of this paper}
\label{sec:contr-this-paper}

We present two different algorithms to reduce the exact integer programming problem~\IP to the approximate version~\aIP.

\begin{enumerate}[a)]  
\item Our first method is a randomized cutting-plane algorithm  that, in time $2^{O(n)}$ and  for any $ℓ ≥ 5(n+1)$  finds a point in $K∩( ℤ^n/ ℓ)$ with high probability, if $K$ contains an integer point.   This algorithm uses an oracle for \aIP on $K$ intersected with one side of a hyperplane that is close to  the center of gravity. Thereby, the algorithm collects  $ℓ$ integer points close to $K$. The collection is such that the convex combination with uniform weights $1/ℓ$  of these points lies in $K$. If, during an iteration, no point is found, the volume of $K$ is roughly halved and eventually $K$ lies on a lower-dimensional subspace on which one can recurse.    \label{item:1}

\item  If equipped with the component-wise remainders  $v ≡ x^* \pmod{ℓ}$ of a solution $x^*$ of \IP, one can use the algorithm to find a point in $(K - v) ∩ ℤ^n $
  and combine it with the remainders to a full solution of \IP,
  using that $(K-v) \cap \ell \setZ^n \neq \emptyset$.
  This  runs in singly exponential randomized time $2^{O(n)}$.   Via enumeration of all remainders,  one obtains an algorithm for \IP that runs in time $2^{O(n)} ⋅ n^n$. This matches the best-known running time for general integer programming~\cite{ApproximateIP-DadushAlgorithmica2014}, which is considerably  involved. %\todo{This should be the Thesis of Daniel and not the approx-IP paper I guess}
  \label{item:3} 
  
\item Our analysis depends on a new \emph{approximate
    Cara\-th\'eodory theorem} that we develop in
  Section~\ref{sec:an-asymm-appr}.  While approximate Carath\'eodory
  theorems are known for centrally symmetric convex bodies~\cite{novikoff1963convergence,barman2015approximating,mirrokni2017tight},
  our version is for general convex sets and might be of
  interest on its own.  \label{item:2}

\item Our second method is for integer programming problems $ Ax = b,\, x ∈ℤ^n, \, 0 ≤x ≤ u$ in equation standard form. We show that such a problem can be reduced to $2^{O(n)} \cdot (∏_i \log(u_i+1))$ instances of \aIP. This yields  a running time of $(\log n)^{O(n)}$ for such  IPs, in which the variables are bounded by a polynomial in the dimension. The so-far best running time for such instances was $2^{O(n)} ⋅n^n$ at the time of the first submission of this paper.  Well known benchmark problems in this setting are \emph{knapsack} and \emph{subset-sum} with polynomial upper bounds on the variables, see Section~\ref{sec:ips-with-polynomial}.  \label{item:5} 
\end{enumerate}

\subsection{Related work}
\label{sec:relatd-work}

If the convex body $K$ is an ellipsoid, then the integer programming problem~\IP is the well known \emph{closest vector problem (CVP)} which can be solved in time $2^{O(n)}$ with an algorithm by Micciancio and Voulgaris~\cite{micciancio2010deterministic}. Blömer and Naewe~\cite{DBLP:journals/tcs/BlomerN09} previously  observed that the sampling technique of Ajtai et al.~\cite{ajtai2001sieve} can be modified in such a way as to solve the closest vector approximately.  More precisely, they showed that a $(1+ε)$-approximation of the closest vector problem can be found in time $O(2 + 1/ε)^n$ time. This was later generalized to arbitrary convex sets by Dadush~\cite{ApproximateIP-DadushAlgorithmica2014}. This algorithm either asserts that the convex body $K$ does not contain any integer points, or it finds an integer point in the body stemming from $K$ scaled by $(1+ε)$ from its center of gravity. Also the running time of this randomized algorithm is $O(2 + 1/ε)^n$.  In our paper, we restrict to the case $ε = 1$ which can be solved in singly exponential time. The technique of reflection sets was also used by Eisenbrand et al.~\cite{eisenbrand2011covering} to solve (CVP) in the $ℓ_∞$-norm approximately in time $O(2 + \log (1/ε))^n$.

In the setting in which integer programming can be attacked with dynamic programming, tight upper and lower bounds on the complexity are known~\cite{eisenbrand2019proximity,jansen2018integer,knop2020tight}. Our $n^n ⋅ 2^{O(n)}$ algorithm could be made more efficient by constraining the possible remainders of a solution $\pmod{ℓ}$ efficiently. This barrier is different than the one in  classical integer-programming methods that are based on branching on flat directions~\cite{lenstra1983integer,GroetschelLovaszSchrijver88} as they result in a branching tree of size $n^{O(n)}$.

The \emph{subset-sum problem} is as follows. Given a set $Z ⊆ ℕ$ of $n$ positive integers and a \emph{target value} $t ∈ ℕ$, determine whether there exists a subset $S ⊆ Z$ with $∑_{ s ∈S} s = t$. Subset sum is a classical
 NP-complete problem that serves as a benchmark in algorithm design. The problem can be solved in pseudopolynomial time~\cite{bellman1966dynamic} by dynamic programming. The current fastest pseudopolynomial-time  algorithm is the one of Bringmann\cite{bringmann2017near} that runs in time $O( n+t)$ up to polylogarithmic factors.  There exist instances of subset-sum whose set of feasible solutions, interpreted as $0/1$ incidence vectors, require numbers of value $n^n$ in the input, see~\cite{alon1997anti}. Lagarias and Odlyzko~\cite{lagarias1985solving} have shown that instances of subset sum in which each number of the input $Z$ is drawn uniformly at random from $\{1,\dots,2^{O(n^2)}\}$ can be solved in polynomial time with high probability. The algorithm of Lagarias and Odlyzko is based on the LLL-algorithm~\cite{lenstra1982factoring} for lattice basis reduction.

\subsection{Subsequent work}

After the acceptance of the conference version of this work, Reis and Rothvoss~\cite{ReisRothvossFOCS23} proved that an
algorithm originally suggested by Dadush~\cite{dadush2012integer} can solve any
$n$-variable integer program $\max\{ \left<c,x\right> \mid Ax \leq b, x \in \setZ^n \}$ in time $(\log n)^{O(n)}$
times a polynomial in the encoding length of $A$, $b$ and $c$. However, the question
whether there is a $2^{O(n)}$-time algorithm remains wide open and the approach used by Reis and Rothvoss
inherently cannot provide running times bounds below $(\log n)^{O(n)}$.

\section{Preliminaries}
A \emph{lattice} $\Lambda$ is the set of integer combinations of linearly independent vectors,
i.e. $\Lambda := \Lambda(B) := \{ Bx \mid x \in \setZ^r \}$ where $B \in \setR^{n \times r}$ has linearly independent columns. The \emph{determinant} is the volume of the $r$-dimensional
parallelepiped spanned by the columns of the basis $B$, i.e. $\det(\Lambda) := \sqrt{\det_r(B^TB)}$.
We say that $\Lambda$ has \emph{full rank} if $n=r$. In that case the determinant
is simply $\det(\Lambda)=|\det_n(B)|$.
%For a full rank lattice generated by $B \in \setR^{n \times n}$,
%we denote $\det(\Lambda) := |\det_n(B)|$ as the determinant of the lattice.
For a full rank lattice $\Lambda$, we denote the
dual lattice by $\Lambda^* = \{ y \in \setR^n \mid \left<x,y\right> \in \setZ \;\forall x \in \Lambda\}$. Note that $\det(\Lambda^*) \cdot \det(\Lambda) = 1$. For an introduction to lattices, we refer to~\cite{MicciancioGoldwasserLatticeBook2002}. 

A set $Q \subseteq \setR^n$ is called a \emph{convex body} if it is
convex, compact and has a non-empty interior. A set  $Q$ is \emph{symmetric} if $Q = -Q$. Recall that any
symmetric convex body $Q$ naturally induces a norm $\| \cdot \|_Q$ of the form $\|x\|_Q = \min\{ s \geq 0 \mid x \in sQ\}$.
For a full rank lattice $\Lambda \subseteq \setR^n$ and a symmetric convex body $Q \subseteq \setR^n$ we denote
$\lambda_1(\Lambda,Q) := \min\{ \|x\|_Q \mid x \in \Lambda \setminus \{ \bm{0}\} \}$ as the length of the shortest vector
with respect to the norm induced by $Q$. 
We denote the Euclidean ball by $B_2^n := \{ x \in \setR^n \mid \|x\|_2 \leq 1\}$ and the $ℓ_∞$-ball by  $B_{\infty}^n := [-1,1]^n$.
An (origin centered) \emph{ellipsoid} is of the form $\pazocal{E} = A(B_2^n)$ where $A : \setR^n \to \setR^n$ is an invertible linear map.
For any such ellipsoid $\pazocal{E}$ there is a unique positive definite matrix $M \in \setR^{n \times n}$
so that $\|x\|_{\pazocal{E}} = \sqrt{x^TMx}$. 
The \emph{barycenter} (or \emph{centroid}) of a convex body $Q$ is the point $\frac{1}{\Vol_n(Q)}\int_Q x \; dx$.
We will use the following version of \aIP  that runs in time  $2^{O(n)}$, provided that the symmetrizer for the used center $c$ is large enough. This is the case for $c$ being the center of gravity, see Theorem~\ref{thm:MilmanPajorIneq}. Note that the center of gravity of a convex body can be (approximately) computed in randomized polynomial time~\cite{dyer1991randomPolyTimeAlgoForApxVolumeInConvexBody,bertsimas2004solving}.  %\todo{T: Is that algorithm randomized? If so we should add the word ``randomized'' here.} 
\begin{theorem}[Dadush~\cite{ApproximateIP-DadushAlgorithmica2014}] \label{thm:ApxCVP}
There is a $2^{O(n)}$-time algorithm $\textsc{ApxIP}(K,c,\Lambda)$ that takes as input a
convex body $K \subseteq \setR^n$, a point $c \in K$ and a lattice $\Lambda \subseteq \setR^n$. Assuming that 
 $\textrm{Vol}_n( (K-c) \cap (c-K) ) \geq 2^{-\Theta(n)} \textrm{Vol}_n(K)$ the algorithm either returns a point $x \in (c+2(K-c)) \cap \Lambda$ 
or returns \textsc{empty} if $K \cap \Lambda = \emptyset$. 
\end{theorem}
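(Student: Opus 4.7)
My plan is to reduce the problem to an approximate closest-vector problem (CVP) in the norm $\|\cdot\|_T$ induced by the symmetrizer $T := (K-c)\cap(c-K)$, and then to solve that CVP with the randomised sieve of Ajtai--Kumar--Sivakumar, in the general-norm form of Blömer and Naewe~\cite{DBLP:journals/tcs/BlomerN09}. The hypothesis $\Vol_n(T)\ge 2^{-\Theta(n)}\Vol_n(K)$ is precisely what makes $T$ a non-degenerate symmetric surrogate for $K-c$; since $T\subseteq K-c$ one has $\alpha T\subseteq \alpha(K-c)$ for every $\alpha>0$, so any lattice vector $y$ with $\|y-c\|_T\le 2$ is already a valid output in $c+2(K-c)$. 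The first, non-obvious ingredient is that a lattice witness $y^*\in K$ need only satisfy $\|y^*-c\|_{K-c}\le 1$ and not $\|y^*-c\|_T\le 1$, so we cannot directly call a CVP oracle with centre $c$; the asymmetry must be absorbed elsewhere.

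I absorb it through a $T/2$-net $\mathcal V\subseteq K-c$ of the asymmetric body $K-c$: a collection such that every $w\in K-c$ satisfies $\|w-v\|_T\le 1/2$ for some $v\in\mathcal V$. A greedy packing argument (pack disjoint copies of $T/4$ into $(K-c)+T/4\subseteq 2(K-c)$) combined with the volume hypothesis yields $|\mathcal V|\le 2^{O(n)}$. For every $v\in\mathcal V$ I invoke the sieve with target $c+v$, promise radius $r=1/2$, and approximation factor $2$ in the norm $\|\cdot\|_T$. If any call returns a lattice vector $y$ with $\|y-(c+v)\|_T\le 1$, then
\[
y-c \;=\; v+(y-c-v) \;\in\; (K-c)+T \;\subseteq\; 2(K-c),
\]
so $y\in c+2(K-c)$ is output. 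Conversely, if every call certifies emptiness at radius $1/2$, then the net property rules out any $y^*\in K\cap\Lambda$ (the nearest $v$ to $y^*-c$ would have given $\|y^*-(c+v)\|_T\le 1/2$), and we safely return \textsc{empty}. The total cost is $|\mathcal V|\cdot 2^{O(n)}=2^{O(n)}$.

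The main obstacle, and the second place where the volume hypothesis on $T$ is essential, is actually implementing the approximate CVP sieve inside the norm of an arbitrary symmetric convex body: the Ajtai--Kumar--Sivakumar analysis is cleanest in Euclidean norm, and the rounding, bucketing and pigeonhole steps must be redone uniformly in $\|\cdot\|_T$. The cleanest route is via an $M$-ellipsoid of $T$ (whose existence follows from Milman's theorem), which sandwiches $\|\cdot\|_T$ between Euclidean norms with only $2^{O(n)}$ slack in the relevant covering numbers. Then the initial lattice rounding reduces to Euclidean CVP -- solvable in $2^{O(n)}$ time by Micciancio--Voulgaris~\cite{micciancio2010deterministic} -- and the sieve's concentration and pigeonhole arguments carry over with only a $2^{O(n)}$ loss in the sample size, which is absorbed by sampling $2^{O(n)}$ perturbations from the start.
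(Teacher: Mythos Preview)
The paper does not prove this statement: Theorem~\ref{thm:ApxCVP} is quoted as a black-box result from Dadush~\cite{ApproximateIP-DadushAlgorithmica2014} in the preliminaries and is used as an oracle throughout, so there is no ``paper's own proof'' to compare against.

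That said, your sketch is essentially the argument of the cited reference. The reduction to $(1+\varepsilon)$-CVP in the norm of the symmetrizer $T=(K-c)\cap(c-K)$, the $T/2$-net over the asymmetric body $K-c$ with size $2^{O(n)}$ via the volume hypothesis, and the containment $(K-c)+T\subseteq 2(K-c)$ are exactly the ingredients Dadush uses. Two points you gloss over deserve care. First, building the net $\mathcal V$ is itself an algorithmic task: a greedy packing argument proves $|\mathcal V|\le 2^{O(n)}$, but actually \emph{enumerating} such a set in $2^{O(n)}$ time already requires the $M$-ellipsoid machinery (cover $K-c$ by $2^{O(n)}$ translates of an ellipsoid $E$ with $N(E,T)\le 2^{O(n)}$), not just the sieve step where you invoke it. Second, the Bl\"omer--Naewe sieve as stated in~\cite{DBLP:journals/tcs/BlomerN09} is for $\ell_p$ norms; extending AKS-style sieving to an arbitrary symmetric norm with only a weak oracle is precisely the contribution of~\cite{ApproximateIP-DadushAlgorithmica2014}, and the $M$-ellipsoid is again what makes the bucketing and perturbation steps go through with $2^{O(n)}$ samples. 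Your last paragraph hints at this but treats it as routine; it is in fact the technical heart of the cited theorem.
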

\noindent
One of the classical results in the geometry of numbers is Minkowski's Theorem which we will use in the following form:
\begin{theorem}[Minkowski's Theorem] \label{thm:MinkowskisTheorem}
  For a full rank lattice $\Lambda \subseteq \setR^n$ and a symmetric convex body $Q \subseteq \setR^n$ one has
  \[
   \lambda_1(\Lambda,Q) \leq 2 \cdot \Big(\frac{\det(\Lambda)}{\Vol_n(Q)}\Big)^{1/n}
  \]
\end{theorem}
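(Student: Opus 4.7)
The plan is to derive this from the standard Minkowski convex body theorem, namely the fact that a symmetric convex body $S \subseteq \setR^n$ with $\Vol_n(S) > 2^n \det(\Lambda)$ must contain a nonzero lattice point. Once this is in hand, the theorem follows by applying it to a scaled copy of $Q$: for any $r' > 2 (\det(\Lambda)/\Vol_n(Q))^{1/n}$, set $S := r' Q$, which is symmetric, convex, and has $\Vol_n(S) = (r')^n \Vol_n(Q) > 2^n \det(\Lambda)$. Then Minkowski's convex body theorem produces some $v \in \Lambda \setminus \{\bm 0\}$ with $v \in r' Q$, hence $\lambda_1(\Lambda,Q) \leq \|v\|_Q \leq r'$. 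Taking the infimum over $r'$ yields the claimed inequality.

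The core work is therefore the convex body theorem, which I would prove via Blichfeldt's lemma: if $T \subseteq \setR^n$ is measurable with $\Vol_n(T) > \det(\Lambda)$, then $T$ contains two distinct points $x, y$ with $x - y \in \Lambda$. Blichfeldt follows from a pigeonhole argument on a fundamental domain $F$ of $\Lambda$: the translated pieces $\{(T - \lambda) \cap F : \lambda \in \Lambda\}$ cannot be pairwise disjoint, because their total volume is $\Vol_n(T) > \det(\Lambda) = \Vol_n(F)$. An overlap in two such pieces gives $t_1 - \lambda_1 = t_2 - \lambda_2$ with $\lambda_1 \neq \lambda_2$, i.e.\ two points $t_1, t_2 \in T$ differing by a nonzero lattice vector.

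To deduce the convex body theorem from Blichfeldt, I apply Blichfeldt to $T := \tfrac{1}{2} S$, which is valid because $\Vol_n(\tfrac{1}{2}S) = 2^{-n}\Vol_n(S) > \det(\Lambda)$. This produces distinct $x, y \in \tfrac{1}{2} S$ with $v := x - y \in \Lambda \setminus \{\bm 0\}$. Since $S$ is symmetric, $-2y \in S$, and since $S$ is convex, $v = \tfrac{1}{2}(2x) + \tfrac{1}{2}(-2y) \in S$, giving the desired nonzero lattice point in $S$.

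The main (mild) subtlety is handling the strict vs.\ non-strict inequality: Blichfeldt as above only yields a nonzero lattice point under the strict condition $\Vol_n(S) > 2^n \det(\Lambda)$, which is why I approach $2(\det(\Lambda)/\Vol_n(Q))^{1/n}$ from above via $r' \downarrow r$ and invoke compactness (the lattice is discrete, so the infimum $\lambda_1(\Lambda, Q)$ is attained) to pass to the closed inequality stated in the theorem. No other real obstacle arises; the argument is entirely classical and self-contained.
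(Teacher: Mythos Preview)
Your proof is correct and follows the classical route through Blichfeldt's lemma. Note, however, that the paper does not actually prove this statement: it is stated as Theorem~\ref{thm:MinkowskisTheorem} in the preliminaries section as a well-known result from the geometry of numbers and is used as a black box later in the analysis. So there is no ``paper's own proof'' to compare against; you have supplied a full, self-contained argument where the authors simply cite the theorem.
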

\noindent 
We will use the following bound on the density of sublattices which is an immediate consequence of
Minkowski's Second Theorem. Here we abbreviate $\lambda_1(\Lambda) := \lambda_1(\Lambda,B_2^n)$.
\begin{lemma} \label{lem:DetOfSublatticeBound}
  Let $\Lambda \subseteq \setR^n$ be a full rank lattice. Then for any $k$-dimensional sublattice $\tilde{\Lambda} \subseteq \Lambda$ one has $\det(\tilde{\Lambda}) \geq (\frac{\lambda_1(\Lambda)}{\sqrt{k}})^k$.
\end{lemma}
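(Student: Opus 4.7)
\medskip
\noindent\textbf{Proof plan.} The plan is to apply Minkowski's Theorem (Theorem~\ref{thm:MinkowskisTheorem}) to $\tilde{\Lambda}$ viewed as a full-rank lattice inside its own $k$-dimensional linear span $V := \linhull(\tilde{\Lambda})$, taking as the symmetric convex body the $k$-dimensional Euclidean unit ball $B_2^k \subseteq V$. The paper suggests invoking Minkowski's Second Theorem, but the First is sufficient here since we only need a lower bound on the successive minima, and the trivial bound $\lambda_i(\tilde{\Lambda}) \geq \lambda_1(\tilde{\Lambda})$ already yields the result.

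The first ingredient is a comparison of shortest vectors: since $\tilde{\Lambda} \subseteq \Lambda$, every nonzero vector of $\tilde{\Lambda}$ is a nonzero vector of $\Lambda$, and the Euclidean norm does not depend on whether it is computed in $V$ or in $\setR^n$. Hence
\[
\lambda_1(\tilde{\Lambda}, B_2^k) \;\geq\; \lambda_1(\Lambda, B_2^n) \;=\; \lambda_1(\Lambda).
\]
Applying Theorem~\ref{thm:MinkowskisTheorem} to the pair $(\tilde{\Lambda}, B_2^k)$ inside $V$ and rearranging gives
\[
\det(\tilde{\Lambda}) \;\geq\; \Vol_k(B_2^k) \cdot \Bigl(\tfrac{\lambda_1(\tilde{\Lambda}, B_2^k)}{2}\Bigr)^{k} \;\geq\; \Vol_k(B_2^k) \cdot \Bigl(\tfrac{\lambda_1(\Lambda)}{2}\Bigr)^{k}.
\]

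The remaining step is the standard volume estimate $\Vol_k(B_2^k) \geq (2/\sqrt{k})^k$. I would prove this by the inclusion $[-1,1]^k \subseteq \sqrt{k}\cdot B_2^k$, which holds because any point in the cube has Euclidean norm at most $\sqrt{k}$. Taking volumes yields $2^k \leq k^{k/2}\, \Vol_k(B_2^k)$, and substituting back gives
\[
\det(\tilde{\Lambda}) \;\geq\; \Bigl(\tfrac{2}{\sqrt{k}}\Bigr)^{k} \cdot \Bigl(\tfrac{\lambda_1(\Lambda)}{2}\Bigr)^{k} \;=\; \Bigl(\tfrac{\lambda_1(\Lambda)}{\sqrt{k}}\Bigr)^{k},
\]
which is the claimed inequality. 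There is essentially no obstacle; the only subtle point is to apply Minkowski's Theorem intrinsically inside $V$ (rather than in the ambient $\setR^n$), so that the ``$\det$'' appearing in Theorem~\ref{thm:MinkowskisTheorem} matches the $k$-dimensional lattice determinant $\det(\tilde{\Lambda})$ defined via $\sqrt{\det_k(B^TB)}$.
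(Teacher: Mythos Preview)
Your argument is correct and matches what the paper has in mind. The paper does not spell out a proof but declares the lemma ``an immediate consequence of Minkowski's Second Theorem''; as you observe, once one lower-bounds each successive minimum by $\lambda_1(\tilde{\Lambda}) \geq \lambda_1(\Lambda)$, the Second Theorem yields exactly the same inequality as the First, so your use of Theorem~\ref{thm:MinkowskisTheorem} directly is entirely equivalent. The volume estimate $\Vol_k(B_2^k) \geq (2/\sqrt{k})^k$ via $\tfrac{1}{\sqrt{k}}B_\infty^k \subseteq B_2^k$ is also the same estimate the paper invokes later in the proof of Lemma~\ref{lem:IterationNumber}.
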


%\paragraph{Convex geometry.}
Finally, we revisit a few facts from \emph{convex geometry}. Details and proofs can be found in the
excellent textbook by Artstein-Avidan, Giannopoulos and Milman~\cite{AsymptoticGeometricAnalysisBook2015}.
\begin{lemma}[Gr\"unbaum's Lemma] \label{lem:Gr\"unbaumLemma}
  Let $K \subseteq \setR^n$ be any convex body and let $\left<a,x\right> = \beta$ be any hyperplane through the
  barycenter of $K$. Then $\frac{1}{e} \textrm{Vol}_n(K) \leq \textrm{Vol}_n(\{ x \in K \mid \left<a,x\right> \leq \beta\}) \leq (1-\frac{1}{e}) \textrm{Vol}_n(K)$.
\end{lemma}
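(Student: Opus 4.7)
\emph{Proof plan.} My plan is to reduce Grünbaum's inequality to a one-dimensional extremal problem via Brunn's theorem, and then identify the extremal density as a cone for which the bound can be computed directly.

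\emph{Reduction to one dimension.} After translating so the barycenter lies at the origin and rotating so the hyperplane is $\{x_1 = 0\}$, I set $f(t) := \Vol_{n-1}(K \cap \{x_1 = t\})$ on its support $[\alpha_-, \alpha_+]$ with $\alpha_- < 0 < \alpha_+$. Brunn's theorem, a standard consequence of the Brunn--Minkowski inequality, tells us that $f^{1/(n-1)}$ is concave on $[\alpha_-, \alpha_+]$. Fubini translates the barycenter condition into $\int_{\alpha_-}^{\alpha_+} t\, f(t)\, dt = 0$, and the lemma becomes the one-dimensional statement $1/e \le R \le 1 - 1/e$, where $R := \int_{\alpha_-}^{0} f(t)\, dt \, / \, \int_{\alpha_-}^{\alpha_+} f(t)\, dt$.

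\emph{Cone extremality.} Among nonnegative functions $f$ on a compact interval $I \ni 0$ with $h := f^{1/(n-1)}$ concave and first moment zero, I claim $R$ is extremized by a cone density of the form $f^*(t) = C(b - t)^{n-1}$ supported on some interval $[a,b]$. The tool is an affine supporting line $\ell$ to $h$ at $t=0$: by concavity, $h \le \ell$ globally with equality at $0$, and the truncation $\max(\ell,0)$ is already a cone density matching $h(0)$. Pointwise domination combined with a mass-rebalancing step to restore the zero-first-moment constraint shows that replacing $f$ by the cone can only push $R$ further from $1/2$. The two possible signs for the slope of $\ell$ (apex at $\alpha_+$ versus apex at $\alpha_-$) yield the two extreme values of $R$.

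\emph{Direct cone computation.} For the cone $f^*(t) = C(\alpha_+ - t)^{n-1}$ on $[a,\alpha_+]$, the barycenter sits at $t = a + (\alpha_+ - a)/(n+1)$, so the zero-first-moment constraint forces $a = -\alpha_+/n$. A direct integration yields
\[
  \frac{\int_{0}^{\alpha_+} (\alpha_+ - t)^{n-1}\, dt}{\int_{-\alpha_+/n}^{\alpha_+} (\alpha_+ - t)^{n-1}\, dt}
  \;=\; \Bigl(\frac{n}{n+1}\Bigr)^n,
\]
which decreases in $n$ to $1/e$ and equals $1/2$ at $n=1$. Hence the apex-side piece has volume fraction $(n/(n+1))^n \in [1/e,\, 1/2]$, and reflecting gives the complementary bound. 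Combined with Step 2, this transfers the interval $[1/e, 1-1/e]$ to arbitrary convex $K$.

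\emph{Main obstacle.} The delicate step is cone extremality: one must reconcile the pointwise comparison $h \le \ell$ with the zero-first-moment constraint while controlling $R$. I would formalize the rebalancing by constructing a continuous deformation from $f$ to $f^*$ through intermediate concave-power densities with zero first moment, and verify that $R$ is monotone along the deformation by differentiating under the integral.
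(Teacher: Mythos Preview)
The paper does not give its own proof of Gr\"unbaum's Lemma; it is listed among the convex-geometry preliminaries with a reference to the textbook of Artstein-Avidan, Giannopoulos and Milman for details. So there is no in-paper argument to compare against, and what follows is an assessment of your sketch on its own.

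Your reduction to a one-dimensional density via Brunn's concavity principle (Step~1) and the explicit cone computation yielding the sharp ratio $(n/(n+1))^n \ge 1/e$ (Step~3) are correct and are exactly the classical route. The substantive gap is Step~2, which you yourself flag. The specific mechanism you propose --- take the supporting line $\ell$ to $h$ at $t=0$, use $f \le \ell^{n-1}$ pointwise, then ``rebalance'' to restore the zero-first-moment constraint --- does not work as stated: passing from $f$ to $\ell^{n-1}$ inflates the mass on \emph{both} sides of the origin simultaneously and moves the barycenter in an uncontrolled direction, so there is no evident reason the ratio $R$ moves monotonically under this replacement; the rebalancing step is asserted rather than argued. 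Your fallback of a ``continuous deformation with monotone $R$'' is a restatement of the goal, not a proof. The standard arguments handle extremality differently --- for instance by replacing each half of $f$ separately by a cone that matches the base value $f(0)$ and the mass on that half, and then comparing first moments half by half --- and if you want a self-contained proof you should write out one of those comparisons carefully rather than rely on the supporting-line heuristic.
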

For a convex body $K$, there are two natural symmetric convex bodies that approximate $K$ in many ways: the
``inner symmetrizer'' $K \cap (-K)$ (provided $\bm{0} \in K$) and the ``outer symmetrizer'' in form of the \emph{difference body} $K-K$. 
The following is a consequence of a more general inequality of Milman and Pajor.
\begin{theorem} \label{thm:MilmanPajorIneq}
  Let $K \subseteq \setR^n$ be any convex body with barycenter $\bm{0}$. Then $\textrm{Vol}_n(K \cap (-K)) \geq 2^{-n}\textrm{Vol}_n(K)$.
\end{theorem}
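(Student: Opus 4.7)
The plan is to derive this bound as a direct specialization of the classical Milman--Pajor intersection inequality: for any two convex bodies $K, L \subseteq \setR^n$ that share a common barycenter,
\[
\Vol_n(K) \cdot \Vol_n(L) \;\leq\; \Vol_n(K \cap L) \cdot \Vol_n(K - L).
\]
Applying this with $L := -K$, which has the same barycenter $\bm{0}$ as $K$, and using $K - L = K + K = 2K$ (so $\Vol_n(K - L) = 2^n \Vol_n(K)$), one obtains
\[
\Vol_n(K)^2 \;\leq\; \Vol_n(K \cap (-K)) \cdot 2^n \Vol_n(K),
\]
and dividing by $2^n \Vol_n(K)$ yields the claimed bound.

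To prove the Milman--Pajor inequality itself, I would work with $f(z) := \Vol_n(K \cap (L + z))$. Four facts are immediate: (i) Fubini gives $\int_{\setR^n} f(z)\, dz = \Vol_n(K)\Vol_n(L)$; (ii) the support of $f$ is the difference body $K - L$; (iii) $f^{1/n}$ is concave on its support, by Brunn--Minkowski applied to the parallel $z$-slices of $K \times L$; and (iv) the shear change of variables $(x, z) \mapsto (x, x - z)$ gives $\int z f(z)\, dz = \Vol_n(L)\!\int_K x\, dx - \Vol_n(K)\!\int_L y\, dy = \bm{0}$, since the barycenters of $K$ and $L$ coincide at $\bm{0}$. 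The desired inequality then reduces to the ``centroid-value dominates average'' bound $f(\bm{0}) \geq \frac{\int f}{\Vol_n(K - L)}$.

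This last reduction is the main obstacle: for a nonnegative $f$ with $f^{1/n}$ concave on a convex body $C \subseteq \setR^n$ and with the measure $f(z)\,dz$ centered at the origin $\bm{0} \in C$, one must show $f(\bm{0}) \geq \frac{1}{\Vol_n(C)}\int_C f$. A clean route is to lift $f$ to the $2n$-dimensional body
\[
\widehat{K} := \bigl\{(z, w) \in C \times \setR^n : \|w\|_2 \leq \beta_n\, f(z)^{1/n}\bigr\},\qquad \beta_n := \Vol_n(B_2^n)^{-1/n},
\]
which is convex (by concavity of $f^{1/n}$), has total $2n$-volume $\int_C f$, and whose barycenter lies at the origin in both the $z$- and $w$-coordinates. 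Reducing further to a one-dimensional slicing argument along a line through the barycenter and invoking Brunn's theorem on concavity of section volumes then delivers the bound, with the centered-measure hypothesis ensuring that the central slice value dominates the uniform average.
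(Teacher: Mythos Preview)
The paper does not prove Theorem~\ref{thm:MilmanPajorIneq}; it is stated in the preliminaries as a known consequence of an inequality of Milman and Pajor, with proofs deferred to the textbook~\cite{AsymptoticGeometricAnalysisBook2015}. So there is no paper-side argument to compare against, and your proposal has to be judged on its own.

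Your reduction to the general Milman--Pajor inequality and the derivation with $L=-K$ are correct, as are the four properties (i)--(iv) of the section function $f(z)=\Vol_n(K\cap(L+z))$. The identification of the remaining task---showing $f(\bm 0)\ge \int_C f/\Vol_n(C)$ for a nonnegative $f$ with $f^{1/n}$ concave on $C=K-L$ and with $f\,dz$ centered at $\bm 0$---is exactly right, and this \emph{is} the heart of the Milman--Pajor argument.

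Where your proposal stalls is the final paragraph. The $2n$-dimensional lift $\widehat K$ is constructed correctly and has the stated volume and barycenter, but it does not simplify the problem: the inequality you then need, namely
\[
\Vol_n\big(\widehat K\cap(\{0\}\times\setR^n)\big)\cdot \Vol_n\big(\pi_z(\widehat K)\big)\;\ge\;\Vol_{2n}(\widehat K),
\]
is, after writing $h(z)=\Vol_n(\widehat K_z)$ and noting that $h^{1/n}$ is concave with $h\,dz$ centered at $\bm 0$, \emph{exactly} the lemma you set out to prove. So the lift is circular. The closing appeal to ``a one-dimensional slicing argument along a line through the barycenter and Brunn's theorem'' is not an argument: Brunn only gives concavity of $(2n-1)$-dimensional section volumes, and one still has to prove the one-variable statement that a $1/m$-concave density on an interval, centered at $0$, has $h(0)$ at least its average. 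That one-variable fact is true, but it is precisely the substance of the Milman--Pajor lemma and requires its own proof (e.g.\ a comparison with an extremal affine-power profile); you have asserted it, not established it.

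In short: the framework is the standard and correct one, but the proof is incomplete at the one genuinely nontrivial step. Since the paper itself only quotes the result, your write-up would be acceptable as a pointer to the literature, but as a self-contained proof it has a gap exactly where the content is.
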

In particular Theorem~\ref{thm:MilmanPajorIneq} implies that choosing $c$ as the barycenter of $K$ in Theorem~\ref{thm:ApxCVP}
results in a $2^{O(n)}$ running time --- however this will not be the choice that we will later make for $c$. 
Also the size of the difference body can be bounded:
\begin{theorem}[Inequality of Rogers and Shephard] \label{thm:RogersSheppard}
  For any convex body $K \subseteq \setR^n$ one has $\Vol_n(K-K) \leq 4^n \Vol_n(K)$.
\end{theorem}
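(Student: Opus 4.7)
The plan is to study the auto-correlation function $f(z) := \Vol_n(K \cap (K+z))$ for $z \in \setR^n$ and use it to bridge $\Vol_n(K)$ and $\Vol_n(K-K)$. I would first establish three properties of $f$: (i) its support is exactly $K-K$, and it attains its maximum $f(0) = \Vol_n(K)$ at the origin; (ii) $f^{1/n}$ is concave on $K-K$, because the inclusion $\tfrac{1}{2}(K\cap(K+z_1)) + \tfrac{1}{2}(K\cap(K+z_2)) \subseteq K \cap (K+\tfrac{z_1+z_2}{2})$ (immediate from convexity of $K$) combined with Brunn--Minkowski gives the concavity; and (iii) Fubini yields $\int_{\setR^n} f(z)\,dz = \Vol_n(K)^2$ via $\int\!\!\int \mathbf{1}_K(x)\mathbf{1}_K(x-z)\,dx\,dz$.

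Given these, the target inequality $\Vol_n(K-K) \leq 4^n \Vol_n(K)$ reduces to the lower bound
\[
\int_{K-K} f(z)\,dz \;\geq\; \frac{f(0)\,\Vol_n(K-K)}{\binom{2n}{n}},
\]
since combining with (iii) gives $\Vol_n(K)^2 \geq \Vol_n(K)\,\Vol_n(K-K)/\binom{2n}{n}$, hence $\Vol_n(K-K) \leq \binom{2n}{n}\Vol_n(K) \leq 4^n \Vol_n(K)$. To obtain this lower bound I would switch to polar coordinates from the origin: since $0 \in \mathrm{int}(K-K)$, one can write $\int_{K-K} f(z)\,dz = \int_{S^{n-1}} \int_0^{\rho(u)} f(tu)\,t^{n-1}\,dt\,du$ and $\Vol_n(K-K) = \tfrac{1}{n}\int_{S^{n-1}} \rho(u)^n\,du$, where $\rho(u)$ is the radial function of $K-K$. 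It then suffices to control each radial integral separately.

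The crux is the one-dimensional estimate along each ray. By (ii) the map $t \mapsto f(tu)^{1/n}$ is concave and nonnegative on $[0,\rho(u)]$, peaks at $t=0$ with value $f(0)^{1/n}$, and vanishes at $t=\rho(u)$, so concavity forces $f(tu)^{1/n} \geq f(0)^{1/n}(1 - t/\rho(u))$. Substituting and computing,
\[
\int_0^{\rho(u)} f(tu)\,t^{n-1}\,dt \;\geq\; f(0)\rho(u)^n \int_0^1 (1-s)^n s^{n-1}\,ds \;=\; \frac{f(0)\rho(u)^n}{n\binom{2n}{n}},
\]
using the beta-function identity $\int_0^1(1-s)^n s^{n-1}\,ds = B(n,n+1) = (n-1)!\,n!/(2n)! = 1/(n\binom{2n}{n})$. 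Integrating over $S^{n-1}$ and recognising the factor $\tfrac{1}{n}\int_{S^{n-1}}\rho(u)^n\,du = \Vol_n(K-K)$ delivers the required bound. The only genuine computational obstacle is this beta-integral identity; the rest of the argument is structural, and the constant $\binom{2n}{n}$ is absorbed into $4^n$ via the crude bound from the binomial theorem.
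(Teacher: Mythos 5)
Your proof is correct. The paper does not prove this statement at all --- it is quoted as a classical fact with a pointer to the textbook of Artstein-Avidan, Giannopoulos and Milman --- so there is nothing to compare against; what you have written is essentially the original Rogers--Shephard argument via the covariogram $f(z)=\Vol_n(K\cap(K+z))$. All three ingredients check out: the inclusion $\tfrac12(K\cap(K+z_1))+\tfrac12(K\cap(K+z_2))\subseteq K\cap(K+\tfrac{z_1+z_2}{2})$ plus Brunn--Minkowski does give $1/n$-concavity of $f$ on $K-K$, the Fubini identity $\int f=\Vol_n(K)^2$ is right, and the radial estimate $f(tu)^{1/n}\geq (1-t/\rho(u))f(0)^{1/n}$ together with $B(n,n+1)=1/(n\binom{2n}{n})$ yields $\Vol_n(K-K)\leq\binom{2n}{n}\Vol_n(K)$, which is in fact the sharp form of the inequality (attained by simplices) and is stronger than the $4^n$ bound the paper needs.
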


Recall that for a convex body $Q$ with $\bm{0} \in \textrm{int}(Q)$, the \emph{polar} is $Q^{\circ} = \{ y \in \setR^n \mid \left<x,y\right> \leq 1 \; \forall x \in Q\}$. 
We will use the following relation between volume of a symmetric convex body and the volume of the polar;
to be precise we will use the lower bound (which is due to Bourgain and Milman).
\begin{theorem}[Blaschke-Santal\'o-Bourgain-Milman] \label{thm:BSBM}
  For any symmetric convex body $Q \subseteq \setR^n$ one has
  \[
   C^n \leq \frac{\Vol_n(Q) \cdot \Vol_n(Q^{\circ})}{\Vol_n(B_2^n)^2} \leq 1
  \]
  where $C>0$ is a universal constant.
\end{theorem}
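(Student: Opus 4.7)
This combines two classical inequalities: the upper bound is the Blaschke-Santal\'o inequality and the lower bound is Bourgain and Milman's reverse Santal\'o inequality. The plan is to prove them separately.

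For the upper bound, I would use Steiner symmetrization. First note the ellipsoid case: if $\pazocal{E} = A(B_2^n)$ for an invertible linear $A$, then $\pazocal{E}^\circ = A^{-T}(B_2^n)$, so
\[
  \Vol_n(\pazocal{E}) \cdot \Vol_n(\pazocal{E}^\circ) \;=\; |\det A|\cdot|\det A^{-T}| \cdot \Vol_n(B_2^n)^2 \;=\; \Vol_n(B_2^n)^2,
\]
giving equality for every ellipsoid, in particular for $B_2^n$. The key lemma to invoke is the Meyer--Pajor inclusion: for any symmetric convex body $Q$ and any hyperplane $H$ through the origin, the Steiner symmetrization $Q^H$ satisfies $\Vol_n(Q^H) = \Vol_n(Q)$ (Fubini) and $(Q^\circ)^H \subseteq (Q^H)^\circ$, the latter checked by a direct computation with support functions in conjugate chords. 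Hence $\Vol_n(Q)\cdot\Vol_n(Q^\circ)$ is nondecreasing under Steiner symmetrization. A classical compactness argument produces a sequence of Steiner symmetrizations under which $Q$ converges in Hausdorff distance to a Euclidean ball of the same volume, and continuity of volume and of the polar operation under Hausdorff limits then yields $\Vol_n(Q)\Vol_n(Q^\circ) \leq \Vol_n(B_2^n)^2$.

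For the lower bound, the plan is Milman's $M$-position route. The black-box ingredient is the following: for every symmetric convex body $Q \subseteq \setR^n$ there exists an origin-centered ellipsoid $\pazocal{E}$ with $\Vol_n(\pazocal{E}) = \Vol_n(Q)$ such that the covering number $N(\pazocal{E}^\circ, Q^\circ)$, defined as the minimum number of translates of $Q^\circ$ needed to cover $\pazocal{E}^\circ$, is at most $c_0^n$ for a universal constant $c_0$. This is a consequence of Milman's reverse Brunn--Minkowski inequality. Granting it, the finishing step is a one-line volume count: covering $\pazocal{E}^\circ$ by $\leq c_0^n$ translates of $Q^\circ$ gives $\Vol_n(\pazocal{E}^\circ) \leq c_0^n\, \Vol_n(Q^\circ)$, so
\[
  \Vol_n(Q)\cdot\Vol_n(Q^\circ) \;\geq\; c_0^{-n} \cdot \Vol_n(\pazocal{E})\cdot\Vol_n(\pazocal{E}^\circ) \;=\; c_0^{-n}\,\Vol_n(B_2^n)^2
\]
by the ellipsoid equality case, which is the claim with $C = 1/c_0$.

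The main obstacle is concentrated entirely in the $M$-position black box; this is itself a deep result and in fact essentially equivalent to the inequality we are after. In a self-contained write-up I would either import the reverse Brunn--Minkowski inequality as given and spend the writing on the covering-number bookkeeping above, or else follow Nazarov's shorter complex-analytic argument, which bypasses $M$-positions by applying H\"ormander's $L^2$-estimates for the $\bar{\partial}$-operator on the tube domain $\setR^n + iQ$ to lower-bound the Bergman kernel at the origin. The upper bound, by contrast, is essentially elementary once Steiner symmetrization is set up carefully.
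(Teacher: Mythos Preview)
The paper does not give its own proof of this theorem; it is stated in Section~2 (Preliminaries) as a classical fact, with proofs deferred to the textbook of Artstein-Avidan, Giannopoulos and Milman. There is therefore nothing in the paper to compare your attempt against.

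That said, your outline is an accurate summary of the standard arguments one finds in that reference: the Meyer--Pajor Steiner-symmetrization proof for the Blaschke--Santal\'o upper bound, and either the $M$-position route or Nazarov's $\bar{\partial}$-argument for the Bourgain--Milman lower bound. One caution on the lower bound: make sure the $M$-position theorem you import as a black box is established independently of the reverse Santal\'o inequality itself (Milman's original iteration via the quotient-of-subspace theorem does this), since several expositions present the two statements as essentially equivalent and you would otherwise be arguing in a circle. Beyond that, nothing in your plan is wrong; it is simply far more than the paper itself undertakes.
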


We will also rely on the result of Frank and Tardos to reduce the bit complexity of constraints: 
\begin{theorem}[Frank, Tardos~\cite{frank1987application}\label{thm:FrankTardos}] 
  There is a polynomial time algorithm that takes $(a,b) \in \setQ^{n+1}$ and $\Delta \in \setN_+$ as input
  and produces a pair $(\tilde{a},\tilde{b}) \in \setZ^{n+1}$ with $\|\tilde{a}\|_{\infty},|\tilde{b}| \leq 2^{O(n^3)} \cdot \Delta^{O(n^2)}$
  so that
  $\left<a,x\right> = b \Leftrightarrow \left<\tilde{a},x\right> = \tilde{b}$ and $\left<a,x\right> \leq b \Leftrightarrow \left<\tilde{a},x\right> \leq \tilde{b}$
  for all $x \in \{ -\Delta,\ldots,\Delta\}^n$.
\end{theorem}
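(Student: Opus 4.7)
The plan is to apply simultaneous Diophantine approximation via the LLL basis reduction algorithm. After scaling by $1/\|(a,b)\|_\infty$ we may assume $\|(a,b)\|_\infty \leq 1$, which preserves the sign pattern of $\langle a, x\rangle - b$ on the cube. For a parameter $\epsilon \in (0,1)$, the LLL-based simultaneous Diophantine algorithm produces, in polynomial time, an integer $q$ with $1 \leq q \leq 2^{O(n^2)} \epsilon^{-(n+1)}$ and integers $(\tilde a, \tilde b) \in \setZ^{n+1}$ satisfying $\|q(a,b) - (\tilde a, \tilde b)\|_\infty \leq \epsilon$. These will serve as the candidate output, with $\|(\tilde a, \tilde b)\|_\infty \leq 2q$.

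For every $x \in \{-\Delta, \ldots, \Delta\}^n$ one has
\begin{equation*}
\bigl| q\bigl(\langle a, x\rangle - b\bigr) - \bigl(\langle \tilde a, x\rangle - \tilde b\bigr) \bigr| \;\leq\; \epsilon (n\Delta + 1).
\end{equation*}
If I choose $\epsilon < 1/(2(n\Delta+1))$, this deviation is strictly less than $1/2$, and since $\langle \tilde a, x\rangle - \tilde b$ is an integer while $q \geq 1$, three of the four desired implications follow for free: the strict inequalities $<$ and $>$ transfer in both directions, and $\langle a, x\rangle = b \Rightarrow \langle \tilde a, x\rangle = \tilde b$. With this choice of $\epsilon$ the intermediate size bound is $\|(\tilde a, \tilde b)\|_\infty \leq 2^{O(n^2)} \Delta^{O(n)}$.

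The main obstacle is the remaining implication $\langle \tilde a, x\rangle = \tilde b \Rightarrow \langle a, x\rangle = b$, which is not guaranteed by the approximation: a point $x$ where $\langle a, x\rangle - b$ is a very small nonzero rational could produce an accidental equality after approximation. To prevent this, I would work inside the rational linear subspace $V \subseteq \setR^{n+1}$ cut out by the equations $\langle(x,-1),\,\cdot\,\rangle = 0$ for every $x \in \{-\Delta,\ldots,\Delta\}^n$ satisfying $\langle a, x\rangle = b$. By construction $(a,-b) \in V$, and any vector in $V$ preserves all the enforced equalities automatically. Replace $\setZ^{n+1}$ by the sublattice $V \cap \setZ^{n+1}$, compute an LLL-reduced basis of this sublattice, and then run simultaneous Diophantine approximation of $(a,-b)$ expressed in the coordinates of that reduced basis; the resulting $(\tilde a, \tilde b)$ satisfies every equation defining $V$ exactly and preserves the strict signs by the estimate above.

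Each of the two lattice-reduction stages contributes a factor of $2^{O(n^2)}\Delta^{O(n)}$ to the $\ell_\infty$-size of the output (the reduced basis of $V \cap \setZ^{n+1}$, and the subsequent LLL approximation in that basis), producing the claimed bound $2^{O(n^3)}\Delta^{O(n^2)}$. The hard part, and the technical heart of the theorem, is the careful bit-complexity analysis of the reduced basis of $V \cap \setZ^{n+1}$: one must bound the coefficients of its spanning equations, compute the sublattice from them without blow-up, and then argue that LLL applied inside this sublattice produces basis vectors still of size $2^{O(n^2)}\Delta^{O(n)}$; this is exactly where the cubic exponent in $n$ arises.
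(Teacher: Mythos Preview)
The paper does not prove this statement at all: Theorem~\ref{thm:FrankTardos} is quoted in the preliminaries as a known result of Frank and Tardos~\cite{frank1987application} and is used only as a black box (in the proof of Theorem~\ref{thm:IPinIneqForm} and in the subset-sum / knapsack corollaries). There is therefore no ``paper's own proof'' to compare your proposal against.

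That said, your sketch is a faithful outline of the original Frank--Tardos argument. The two-stage structure --- first pass to a sublattice $V \cap \setZ^{n+1}$ that enforces the exact equalities, then run LLL-based simultaneous Diophantine approximation inside that sublattice to preserve strict signs --- is exactly how the proof goes, and your identification of the reverse equality implication as the step that forces the sublattice detour is correct. The part you flag as ``the hard part'' (controlling the bit size of a reduced basis of $V \cap \setZ^{n+1}$ and of the subsequent approximation) is indeed where the $2^{O(n^3)}\Delta^{O(n^2)}$ bound comes from, and it requires some care: one typically describes $V$ by a small integer matrix of height $\Delta$, bounds the determinant of $V \cap \setZ^{n+1}$ via Hadamard, and then uses the standard LLL output-size bounds. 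Your write-up waves at this rather than carrying it out, so as a self-contained proof it is incomplete, but as a proposal it is on the right track and matches the literature.
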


\section{The Cut-or-Average algorithm}
\label{sec:cut-or-average}
%\begin{comment}
First, we discuss our $\textsc{Cut-Or-Average}$ algorithm that on input of a convex set $K$,
a lattice $\Lambda$ and integer $\ell \geq 5(n+1)$, either finds a point $x \in \frac{\Lambda}{\ell} \cap K$ or decides
that $K \cap \Lambda = \emptyset$ in time $2^{O(n)}$.
Note that for any polyhedron $K = \{ x \in \setR^n \mid Ax \leq b\}$ with rational $A,b$ and lattice $\Lambda$ with basis $B$ one can compute a value of $\Delta$ so that $\log(\Delta)$ is polynomial in the
encoding length of $A$, $b$ and $B$ and $K \cap \Lambda \neq \emptyset$ if and only if
$K \cap [-\Delta,\Delta]^n \cap \Lambda \neq \emptyset$. See Schrijver~\cite{TheoryOfLPandIP-Schrijver1999} for details. In other words, w.l.o.g.
we may assume that our convex set is bounded.
The pseudo code of the algorithm can be found in Figure~\ref{fig:CutOrAveAlgo}. 
% We suggest the following algorithm:%$
\begin{figure} 
\begin{center}
  % \psframebox{
  \begin{minipage}{15cm}
    %% \textsc{Cut-Or-Average} \vspace{2mm} \hrule \vspace{2mm}
    \hrule \vspace{2mm}
{\bf Input:} Convex set $K \subseteq \setR^{n}$, lattice $\Lambda$, parameter $\ell \geq 5(n+1)$ \\
{\bf Output:} Either a point $x \in K \cap \frac{\Lambda}{\ell}$ or conclusion that $K \cap \Lambda = \emptyset$
\begin{enumerate*}
%\item[(1)] Set $K' := K$
\item[(1)] WHILE $\lambda_1(\Lambda^*,(K-K)^{\circ}) > \frac{1}{2}$ DO
  \begin{enumerate*}
\item[(2)] Compute barycenter $c$ of $K$.
%\item[(3)] Set $Q := (K-c) \cap (c-K)$ and
\item[(3)] Let $\pazocal{E} := \{x \in \setR^n \mid x^T M x \leq 1\}$ be a $\bm{0}$-centered ellipsoid with $c+\pazocal{E} \subseteq K \subseteq c+R\cdot \pazocal{E}$ for $R := n+1$, let $\rho := \frac{1}{4n}$.
\item[(4)] Let $z := \textsc{ApxIP}(K, c, \Lambda)$, $X = \{z\}$. If $z = \textsc{empty}$, Return $\textsc{empty}$.
\item[(5)] WHILE $\|c-z\|_\pazocal{E} > \frac{1}{4}$ DO
  \begin{enumerate*}
  \item[(6)] Let $d := \frac{-(z-c)}{\|z-c\|_{\pazocal{E}}}$, $a := -M(z-c)$.  
  \item[(7)] Compute $x := \textsc{ApxIP}(K \cap \{ x \in \setR^n \mid \left<a,x\right> \geq \left<a,c+ \rho d/2\right> \}, c+\rho d, \Lambda)$.
  \item[(8)] IF $x = \textsc{empty}$ THEN replace $K$ by $K' := K \cap \{ x \in \setR^n \mid \left<a,x\right> \leq \left<a,c+\rho d/2\right> \}$ and GOTO (1).
  \item[(9)] ELSE $X := X \cup \{x\}$, $z := (1-\frac{1}{|X|})z + \frac{x}{|X|}$.
  \end{enumerate*}
\item[(10)] Compute $\mu \in \frac{\setZ_{\geq 0}^X}{\ell}$ with $\sum_{x \in X} \mu_x=1$ and $\sum_{x \in X} \mu_x x \in K$  using Asymmetric Approximate Carath\'eodory (see Sec~\ref{sec:an-asymm-appr}).
  \item[(11)] Return $\sum_{x \in X} \mu_xx$.
\end{enumerate*}
\item[(12)] Compute $y \in \Lambda^* \setminus \{ \bm{0}\}$ with $\|y\|_{(K-K)^{\circ}} \leq \frac{1}{2}$.
\item[(13)] Find $\beta \in \setZ$ so that $K \cap \Lambda \subseteq U$ with $U = \{ x \in \setR^n  \mid \left<y,x\right> = \beta\}$.
\item[(14)] IF $n=1$ THEN $U = \{ x^*\}$; Return $x^*$ if $x^* \in \Lambda$ and return ``$K \cap \Lambda = \emptyset$'' otherwise. 
\item[(15)] Recurse on $(n-1)$-dim. instance $\textsc{Cut-Or-Average}(K \cap U, \Lambda \cap U, \ell)$. % $(n-1)$-dimensional instance $$ 
\end{enumerate*} \hrule
\end{minipage}
\caption{The Cut-Or-Average algorithm.\label{fig:CutOrAveAlgo}}
 \end{center}
\end{figure}
 An intuitive description of the algorithm is as follows: we compute the barycenter $c$ of $K$ and an ellipsoid
 $\pazocal{E}$ that approximates $K$ up to a factor of $R = n+1$.
 % ---
 %Then we iteratively use the
 %oracle for approximate integer programming from Theorem~\ref{thm:ApxCVP} to find a convex
 %combination $z$ of lattice points in a 3-scaling of  $K$ until $z$ is close
 %to the barycenter $c$.
 % ---
  The goal is to find a point $z$ close to the barycenter $c$ so that $z$ is a convex combination
 of lattice points that all lie in a 3-scaling of $K$. We begin by choosing $z$ as any such lattice
 vector and then iteratively update $z$ using the oracle for approximate integer programming from Theorem~\ref{thm:ApxCVP} to move closer to $c$.
 %----
 If this succeeds, then we can directly use an asymmetric version
 of the \emph{Approximate Carath\'eodory Theorem} (Lemma~\ref{lem:AsymApxCaratheodory}) to find an unweighted average of
 $\ell$ lattice points that lies in $K$; this would be a vector of the form $x \in \frac{\Lambda}{\ell} \cap K$. If the algorithm fails to approximately express $c$ as a convex combination of lattice points,
 then we will have found a hyperplane $H$ going almost through the barycenter $c$ so that $K \cap H_{\geq}$ does not contain a lattice point. Then the algorithm continues searching in $K \cap H_{\leq}$. This case might happen repeatedly, but after polynomial number of times, the volume of $K$ will have dropped below
 a threshold so that we may recurse on a \emph{single} $(n-1)$-dimensional subproblem. 
 We will now give the detailed analysis. Note that in order to obtain a clean exposition we did not aim to optimize any
 constant. However by merely tweaking the parameters one could make the choice of $\ell = (1+\varepsilon)n$ work
 for any constant $\varepsilon > 0$.
\begin{figure}
 \begin{center}
  \includegraphics{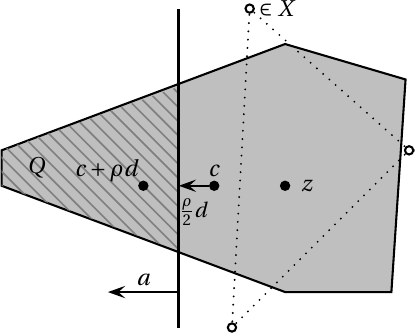}
\end{center}
  \caption{Visualization of the inner WHILE loop where $Q := K \cap \{ x \in \setR^n \mid \left<a,x\right> \geq \left<a,c+\frac{\rho}{2}d\right>\}$.}
\end{figure}

\subsection{Bounding the number of iterations}
We begin the analysis with a few estimates that will help us to bound the number of iterations.
\begin{lemma} \label{lem:XisIn3ScalingOfK}
  Any point $x$ found in line (7) lies in a 3-scaling of $K$ around $c$, i.e. $x \in c + 3(K-c)$ assuming $0 <\rho \leq 1$.
\end{lemma}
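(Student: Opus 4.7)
The plan is to simply unfold the guarantee of \textsc{ApxIP} applied at line (7) and use convexity with the fact that $c$ has a small $\pazocal{E}$-neighborhood inside $K$. Recall that the call in line (7) uses the center $c+\rho d$ and the feasible set $K' := K \cap \{x \mid \langle a,x\rangle \geq \langle a, c+\rho d/2\rangle\} \subseteq K$. By Theorem~\ref{thm:ApxCVP}, the returned $x$ lies in $(c+\rho d) + 2(K' - (c+\rho d))$, so there exists $y \in K' \subseteq K$ with
\[
x \;=\; (c+\rho d) + 2\bigl(y - (c+\rho d)\bigr) \;=\; 2y - c - \rho d .
\]

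Next, I would rearrange this identity to put $x-c$ over $3$ and interpret the result as a convex combination. Namely,
\[
c + \tfrac{1}{3}(x - c) \;=\; c + \tfrac{1}{3}(2y - 2c - \rho d) \;=\; \tfrac{2}{3}\,y \;+\; \tfrac{1}{3}\bigl(c - \rho d\bigr).
\]

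Now the only thing to check is that both points on the right-hand side belong to $K$. The point $y$ lies in $K$ by construction. For $c - \rho d$, recall from line (6) that $d = -(z-c)/\|z-c\|_{\pazocal{E}}$, so $\|d\|_{\pazocal{E}} = 1$; with $0 < \rho \leq 1$ this gives $\|\rho d\|_{\pazocal{E}} \leq 1$, whence $c - \rho d \in c + \pazocal{E} \subseteq K$ using the inclusion established in line (3). Convexity of $K$ then yields $c + \tfrac{1}{3}(x-c) \in K$, i.e.\ $x \in c + 3(K - c)$, as desired.

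The proof is essentially just bookkeeping, so there is no real obstacle; the only delicate step is being careful to invoke Theorem~\ref{thm:ApxCVP} with the correct shifted center $c+\rho d$ rather than $c$, and to notice that it is the inclusion $c + \pazocal{E} \subseteq K$ (not some symmetrizer condition) that lets us absorb the $-\rho d$ term with a $1/3$ weight. The factor $3$ in the conclusion is exactly what comes out of the algebra: the factor $2$ from \textsc{ApxIP} plus one extra unit of $\pazocal{E}$-radius absorbed into $K$.
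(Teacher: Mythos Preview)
Your proof is correct and follows essentially the same idea as the paper: use the \textsc{ApxIP} guarantee at the shifted center $c+\rho d$ and absorb the stray $\rho d$ term via $c\pm\rho d\in c+\pazocal{E}\subseteq K$. The paper phrases this as a one-line set inclusion $c+2(K-c)\pm\rho d\subseteq c+3(K-c)$, while you unpack the same containment pointwise as a $\tfrac{2}{3}/\tfrac{1}{3}$ convex combination; the content is identical.
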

\begin{proof}
  We verify that
\[
x \in (c-\rho d) + 2(K-(c-\rho d)) = c + 2(K-c) + \rho d \subseteq c + 3(K-c) %\subseteq c + 3R \pazocal{E},
\]
 using that $\|\rho d\|_{\pazocal{E}} = \rho \leq 1$.
\end{proof}
Next we bound the distance of $z$ to the barycenter: 
\begin{lemma}\label{lem:distance}
  At the beginning of the $k$th iterations of the WHILE loop on line (5), one has $\|c-z\|_{\pazocal{E}}^2 \leq \frac{9 R^2}{k}$. 
\end{lemma}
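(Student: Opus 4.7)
The plan is to proceed by induction on $k$, writing $z_k$ for the value of $z$ at the start of the $k$th iteration of the inner WHILE loop (line~(5)), and setting $f_k := \|c - z_k\|_{\pazocal{E}}^2$. By an easy induction on the update rule in line~(9), $z_k = \frac{1}{k}\sum_{i=1}^{k} x_i$ is the uniform average of the $k$ points collected so far, where $x_1$ is the point returned by \textsc{ApxIP} on line~(4) and $x_2, x_3, \ldots$ are the points produced on line~(7) in the previous iterations.

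For the base case $k=1$, the specification of \textsc{ApxIP} in Theorem~\ref{thm:ApxCVP} guarantees $x_1 \in c + 2(K - c) \subseteq c + 2R\pazocal{E}$, so $f_1 \leq 4R^2 \leq 9R^2$. For the inductive step, the update $z_{k+1} = \tfrac{k}{k+1} z_k + \tfrac{1}{k+1} x_{k+1}$ gives the expansion
\[
(k+1)^2 f_{k+1} \;=\; k^2 f_k \;+\; 2k\,(c - z_k)^T M (c - x_{k+1}) \;+\; \|c - x_{k+1}\|_{\pazocal{E}}^2.
\]
The squared last term is at most $9R^2$ because Lemma~\ref{lem:XisIn3ScalingOfK} places $x_{k+1}$ in $c + 3(K-c) \subseteq c + 3R\pazocal{E}$.

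The crux of the proof is that the cross term is nonpositive. Since $a = -M(z_k - c)$, we have $(c - z_k)^T M = a^T$, so the cross term equals $-\langle a, x_{k+1} - c\rangle$. The separation constraint imposed on line~(7) yields $\langle a, x_{k+1} - c\rangle \geq \tfrac{\rho}{2}\langle a, d\rangle$, and plugging in $d = -(z_k-c)/\|z_k - c\|_{\pazocal{E}}$ together with $a = -M(z_k-c)$ gives $\langle a, d\rangle = \|z_k - c\|_{\pazocal{E}} > 0$. Hence the cross term is nonpositive (in fact, bounded above by $-\tfrac{k\rho}{1}\|z_k - c\|_{\pazocal{E}}$, though we do not need this stronger bound). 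Combining everything yields $(k+1)^2 f_{k+1} \leq k^2 f_k + 9R^2$, and the inductive hypothesis $k f_k \leq 9R^2$ then gives $(k+1)^2 f_{k+1} \leq 9R^2(k+1)$, i.e. $f_{k+1} \leq 9R^2/(k+1)$.

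The only real subtlety is getting the sign of the cross term right, which is purely a matter of carefully tracking the definitions of $a$ and $d$; otherwise this is a standard Frank–Wolfe-style potential analysis where averaging in a new atom that lies on the correct side of a hyperplane through (nearly) the current iterate delivers the $1/k$ convergence rate.
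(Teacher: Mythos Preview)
Your argument is essentially the same induction as the paper's, and the algebra is correct. There is, however, one slip in the justification of the key sign condition. You write that ``the separation constraint imposed on line~(7) yields $\langle a, x_{k+1} - c\rangle \geq \tfrac{\rho}{2}\langle a, d\rangle$''. This would be true if $x_{k+1}$ lay in the set $Q := K \cap \{x : \langle a,x\rangle \geq \langle a, c+\tfrac{\rho}{2}d\rangle\}$, but the output of \textsc{ApxIP} is only guaranteed to lie in the $2$-scaling $\tilde{c} + 2(Q-\tilde{c})$ with $\tilde{c}=c+\rho d$, not in $Q$ itself.

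Fortunately the weaker inequality you actually need, $\langle a, x_{k+1} - c\rangle \geq 0$, does hold (and is exactly what the paper states): writing $x_{k+1} = 2q - \tilde{c}$ for some $q \in Q$, one has
\[
\langle a, x_{k+1}\rangle \;=\; 2\langle a,q\rangle - \langle a,\tilde{c}\rangle \;\geq\; 2\langle a, c+\tfrac{\rho}{2}d\rangle - \langle a, c+\rho d\rangle \;=\; \langle a, c\rangle.
\]
With this correction your proof goes through and matches the paper's.
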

\begin{proof}
We prove the statement by induction on $k$. At $k=1$, by construction on
line (4), $z \in c+2(K-c) \subseteq c + 2R \pazocal{E}$. Thus
$\|c-z\|_{\pazocal{E}}^2 \leq (2R)^2 \leq 9R^2$, as needed.

Now assume $k \geq 2$. Let $z,z'$ denote the values of $z$ during iteration $k-1$
before and after the execution of line (9) respectively, and let $x$ be the
vector found on line (7) during iteration $k-1$. Note that $z' = (1-\frac{1}{k}) z +
\frac{1}{k} x$. By the induction hypothesis, we have that $\|z-c\|_{\pazocal{E}}^2 \leq
9 R^2/(k-1)$. Our goal is to show that $\|z'-c\|_{\pazocal{E}}^2 \leq 9 R^2/k$.
In (6), we define $d$ as the normalized version of $z-c$ with
$\|d\|_{\pazocal{E}} =  1$ and hence $d \in K-c$. By construction  $\langle a, x-c \rangle \geq 0$
and from Lemma~\ref{lem:XisIn3ScalingOfK} we have $x \in c + 3(K-c)$ which implies $\|x-c\|_{\pazocal{E}} \leq 3R$.
The desired bound on the $\pazocal{E}$-norm of $z'-c$ follows from the following calculation:
\begin{align*}
\|z'-c\|_{\pazocal{E}}^2 &= \Big\|\Big(1-\frac{1}{k}\Big)(z-c) + \frac{1}{k}(x-c)\Big\|_{\pazocal{E}}^2
\\
&= \Big(1-\frac{1}{k}\Big)^2 \|z-c\|_{\pazocal{E}}^2 -2\Big(1-\frac{1}{k}\Big) \frac{1}{k} \langle a, x-c \rangle + \frac{1}{k^2}\|x-c\|_{\pazocal{E}}^2 \\
&\leq \Big(1-\frac{1}{k}\Big)^2 \|z-c\|_{\pazocal{E}}^2 + \frac{1}{k^2}\|x-c\|_{\pazocal{E}}^2 \\
&\leq \Big(\Big(1-\frac{1}{k}\Big)^2 \frac{1}{k-1} + \frac{1}{k^2}\Big) \cdot 9R^2 = \frac{9R^2}{k}.
\end{align*}

\end{proof}
In particular Lemma~\ref{lem:distance} implies an upper bound on the number of iterations of the inner WHILE loop: 
\begin{corollary}
The WHILE loop on line (5) never takes more than $36R^2$ iterations.
\end{corollary}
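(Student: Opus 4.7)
The plan is a one-line reduction to the preceding lemma. Lemma~\ref{lem:distance} guarantees that at the start of iteration $k$ of the inner WHILE loop on line~(5), the point $z$ satisfies $\|c-z\|_{\pazocal{E}}^2 \leq 9R^2/k$. The loop continues into iteration $k$ only when $\|c-z\|_{\pazocal{E}} > \tfrac{1}{4}$, i.e.\ when $\|c-z\|_{\pazocal{E}}^2 > \tfrac{1}{16}$. Combining these two inequalities gives $\tfrac{1}{16} < 9R^2/k$, and solving for $k$ yields an $O(R^2)$ upper bound on the number of iterations; tracking the constants (and, where appropriate, the slightly tighter per-step decay achievable by exploiting the inner-product term $\langle a, x-c\rangle \geq 0$ dropped in the calculation of Lemma~\ref{lem:distance}) produces the claimed $36R^2$.

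There is no real obstacle here: the corollary is purely bookkeeping once the distance lemma is in hand. The only two things to verify are (i) that Lemma~\ref{lem:distance}'s inductive bound continues to be valid throughout the execution of the loop --- which is automatic, since its proof uses exactly the update rule in line~(9) together with the guarantee from line~(7) that $x \in c + 3(K-c)$ provided by Lemma~\ref{lem:XisIn3ScalingOfK} --- and (ii) that the loop truly does get executed in sequence so that the iteration counter $k$ coincides with $|X|$ as used in the averaging step. Both are immediate from the pseudocode, so the proof reduces to the display equation above and a statement that once $k$ exceeds the stated threshold the WHILE guard is necessarily false.
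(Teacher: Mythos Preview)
Your approach is exactly the paper's: plug the bound from Lemma~\ref{lem:distance} into the loop guard and read off the iteration count. The paper's entire proof is the single line ``for $k := 36R^2$ one has $\|c-z\|_{\pazocal{E}}^2 \leq 9R^2/k \leq \tfrac14$''.

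One remark on the constants, since you hedge there. With the guard as written, $\|c-z\|_{\pazocal{E}}>\tfrac14$, your computation $\tfrac{1}{16} < 9R^2/k$ gives $k<144R^2$, not $36R^2$. Your appeal to a ``tighter per-step decay'' from the dropped inner-product term is not how the paper obtains $36R^2$; the paper simply compares the \emph{squared} norm to $\tfrac14$ (equivalently the norm to $\tfrac12$). In other words, the $36R^2$ in the corollary matches a threshold of $\tfrac12$ on $\|c-z\|_{\pazocal{E}}$, and there is a harmless mismatch of a factor $4$ between the stated guard and the stated iteration bound in the paper itself. Either constant suffices for everything downstream, so there is no need to invent a sharper decay argument---just write the one inequality and note the resulting $O(R^2)$ bound.
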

\begin{proof}
By Lemma~\ref{lem:distance}, for $k := 36R^2$ one has $\|c-z\|_{\pazocal{E}}^2 \leq \frac{9R^2}{k} \leq \frac{1}{4}$.
\end{proof}
Next, we prove that every time we replace $K$ by $K' \subset K$ in line (8), its volume drops by a constant
factor.  
\begin{lemma} \label{lem:VolumeDecrease}
  In step (8) one has $\textrm{Vol}_n(K') \leq (1-\frac{1}{e}) \cdot (1+\frac{\rho}{2})^n \cdot \textrm{Vol}_n(K)$ for any $\rho \geq 0$.
  In particular for $0 \leq \rho \leq \frac{1}{4n}$ one has $\textrm{Vol}_n(K') \leq \frac{3}{4} \textrm{Vol}_n(K)$.
\end{lemma}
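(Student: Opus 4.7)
The plan is to show that $K'$ is contained in a mild dilation of the barycentric half $K_H := K \cap \{x : \langle a,x\rangle \leq \langle a,c\rangle\}$ of $K$ with scale factor $s := 1+\rho/2$, and then to apply Gr\"unbaum's Lemma to $K_H$. Concretely, I would construct a dilation $\Phi_s(x) = p + s(x-p)$ around some pivot $p$ and prove $K' \subseteq \Phi_s(K_H)$. The tempting choice $p = c$ does not work: $K'$ contains points with $\langle a,x\rangle > \langle a,c\rangle$, and a $c$-centered contraction cannot push such points across the hyperplane through $c$. The fix is to take the pivot strictly on the $-a$ side of $c$.

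Concretely, I would set $p := c - d$. Since $\|d\|_{\pazocal{E}} = 1$ and the inner ellipsoid $c + \pazocal{E}$ is contained in $K$ and symmetric about $c$, the point $p$ lies in $K$. For any $x \in K'$, the preimage $y := p + (x-p)/s$ lies on the segment $[p,x] \subseteq K$, so $y \in K$ by convexity. To verify $y \in K_H$, I would use $a = -M(z-c)$ and $d = -(z-c)/\|z-c\|_{\pazocal{E}}$ to get $\langle a,d\rangle = \|z-c\|_{\pazocal{E}}$, hence $\langle a,p-c\rangle = -\|z-c\|_{\pazocal{E}}$. Combined with the defining inequality $\langle a,x-c\rangle \leq (\rho/2)\|z-c\|_{\pazocal{E}}$ of $K'$, one line of algebra gives
\[
\langle a, y-c\rangle \;=\; \langle a, p-c\rangle + \tfrac{1}{s}\langle a, x-p\rangle \;\leq\; -\|z-c\|_{\pazocal{E}} + \tfrac{1+\rho/2}{s}\|z-c\|_{\pazocal{E}} \;=\; 0,
\]
establishing $K' \subseteq \Phi_s(K_H)$.

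Once this containment is in hand, Gr\"unbaum's Lemma at the hyperplane $\{\langle a,x\rangle = \langle a,c\rangle\}$ through the barycenter yields $\Vol_n(K_H) \leq (1-1/e)\Vol_n(K)$, and scaling gives
\[
\Vol_n(K') \;\leq\; s^n \Vol_n(K_H) \;\leq\; (1+\rho/2)^n (1-1/e)\Vol_n(K),
\]
which is the first claim. For the specialization to $0 \leq \rho \leq 1/(4n)$, I would bound $(1+\rho/2)^n \leq e^{n\rho/2} \leq e^{1/8}$ and check numerically that $(1-1/e)e^{1/8} < 3/4$. The main obstacle is choosing the pivot correctly: $p = c-d$ is precisely the choice that forces the scale factor to equal $1+\rho/2$ via $\langle a,c-p\rangle = \|z-c\|_{\pazocal{E}}$, and the inner-ellipsoid condition $c + \pazocal{E} \subseteq K$ is exactly what is needed to keep $p$ inside $K$.
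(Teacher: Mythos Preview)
Your argument is correct. Both you and the paper reduce to Gr\"unbaum's Lemma applied to the barycentric half $K_H = K \cap \{\langle a,x\rangle \leq \langle a,c\rangle\}$, but you reach the comparison $\Vol_n(K') \leq (1+\rho/2)^n \Vol_n(K_H)$ by a different mechanism. The paper normalizes to $c=\bm 0$, $\pazocal{E}=B_2^n$, and invokes the standard Brunn--Minkowski consequence that $t \mapsto \Vol_n(K_{\leq t})^{1/n}$ is concave on its support, instantiated at the triple $t \in \{-1,0,\rho/2\}$; dropping the nonnegative term $\Vol_n(K_{\leq -1})^{1/n}$ gives the inequality immediately. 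You instead exhibit an explicit dilation $\Phi_s$ with pivot $p = c-d$ and scale $s=1+\rho/2$ and verify the containment $K' \subseteq \Phi_s(K_H)$ by hand. Unwinding your containment, it reads $\tfrac{1}{1+\rho/2}K_{\leq \rho/2} + \tfrac{\rho/2}{1+\rho/2}\{p\} \subseteq K_{\leq 0}$, which is exactly the degenerate case of the Brunn--Minkowski containment where one summand is the singleton $\{p\}$; so the two proofs are cousins, with yours being the more elementary, self-contained version (no appeal to the concavity fact) and the paper's being the one-line citation. The numerical check $(1-1/e)e^{1/8} < 3/4$ matches the paper's.
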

\begin{proof}
  The claim is invariant under affine linear transformations, hence we may assume w.l.o.g. that $\pazocal{E} = B_2^n$, $M = I_n$ and $c=\bm{0}$.
 Note that then  $B_2^n \subseteq K \subseteq R B_2^n$. %$
Let us abbreviate $K_{\leq t} := \{ x \in K \mid \left<d,x\right> \leq t\}$. % and let $[\alpha,\beta] := \{ <d,x> : x \in K\}$ with  $[-1,1] \subseteq [\alpha,\beta] \subseteq [-R,R]$. 
In this notation $K' = K_{\leq \rho/2}$. Recall that Gr\"unbaum's Lemma (Lemma~\ref{lem:Gr\"unbaumLemma}) guarantees that $\frac{1}{e} \leq \frac{\textrm{Vol}_{n}(K_{\leq 0})}{\textrm{Vol}_n(K)} \leq 1-\frac{1}{e}$. Moreover, it is well known 
that the function $t \mapsto \textrm{Vol}_{n}(K_{\leq t})^{1/n}$ is concave on its support, see again \cite{AsymptoticGeometricAnalysisBook2015}.
Then
\[
  \textrm{Vol}_n(K_{\leq 0})^{1/n} \geq \Big(\frac{1}{1+\rho/2}\Big) \cdot \textrm{Vol}_n(K_{\leq \rho/2})^{1/n} + \Big(\frac{\rho/2}{1+\rho/2}\Big) \cdot \underbrace{\textrm{Vol}_n(K_{\leq -1})^{1/n}}_{\geq 0} \geq \Big(\frac{1}{1+\rho/2}\Big) \cdot \textrm{Vol}_n(K_{\leq \rho/2})^{1/n}
\]
and so
\[
\Big(1-\frac{1}{e}\Big) \cdot \textrm{Vol}_n(K) \geq \textrm{Vol}_n(K_{\leq 0}) \geq \Big(\frac{1}{1+\rho/2}\Big)^n \cdot \textrm{Vol}_n(K_{\leq \rho/2}) 
\]
Rearranging gives the first claim in the form $\textrm{Vol}_n(K_{\leq \rho/2}) \leq (1-\frac{1}{e}) \cdot (1+\frac{\rho}{2})^n \cdot \textrm{Vol}_n(K)$. For the 2nd part we verify that for $\rho \leq \frac{1}{4n}$ one has $(1-\frac{1}{e}) \cdot (1+\frac{\rho}{2})^n \leq (1-\frac{1}{e}) \cdot \exp(\frac{\rho}{2}) \leq \frac{3}{4}$.
\end{proof}

\begin{lemma} \label{lem:IterationNumber}
  Consider a call of $\textsc{Cut-Or-Average}$ on $(K,\Lambda)$ where $K \subseteq r B_2^n$ for some $r>0$. Then the total
  number of iterations of the outer WHILE loop over all recursion levels is bounded by $O(n^2 \log(\frac{nr}{\lambda_1(\Lambda)}))$.
\end{lemma}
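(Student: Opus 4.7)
The plan is to bound the number of iterations of the outer WHILE loop within a single call, and then sum over the at most $n$ recursion levels. Within one call, every iteration of the outer loop either reaches line (8), in which case $K$ is replaced by $K'$ with $\Vol_n(K') \leq \tfrac{3}{4}\Vol_n(K)$ by Lemma~\ref{lem:VolumeDecrease}, or it completes the inner WHILE loop and terminates by returning at line (11). So the number of outer iterations within one call is at most one plus the number of volume-cutting steps, and I only need to bound how many cuts can occur before the guard $\lambda_1(\Lambda^*, (K-K)^{\circ}) > \tfrac{1}{2}$ ceases to hold.

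To this end I translate the guard into a lower bound on $\Vol_n(K)$. Applying Minkowski's theorem (Theorem~\ref{thm:MinkowskisTheorem}) to $\Lambda^*$ and the symmetric body $(K-K)^{\circ}$ yields
$$\tfrac{1}{2} < \lambda_1(\Lambda^*, (K-K)^{\circ}) \leq 2 \left(\frac{\det(\Lambda^*)}{\Vol_n((K-K)^{\circ})}\right)^{1/n}.$$
Using $\det(\Lambda^*) = 1/\det(\Lambda)$, the Bourgain--Milman lower bound $\Vol_n((K-K)^{\circ}) \geq C^n \Vol_n(B_2^n)^2/\Vol_n(K-K)$ from Theorem~\ref{thm:BSBM}, and $\Vol_n(K-K) \leq 4^n \Vol_n(K)$ from Theorem~\ref{thm:RogersSheppard}, this chain (absorbing $\Vol_n(B_2^n)^{2/n} = \Theta(1/n)$) rearranges to $\Vol_n(K)^{1/n} = \Omega(\det(\Lambda)^{1/n}/n)$. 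Plugging in $\det(\Lambda)^{1/n} \geq \lambda_1(\Lambda)/\sqrt{n}$ from Lemma~\ref{lem:DetOfSublatticeBound} gives the desired lower bound $\Vol_n(K)^{1/n} = \Omega(\lambda_1(\Lambda)/n^{3/2})$ so long as the guard holds.

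Since $K \subseteq rB_2^n$ initially implies $\Vol_n(K)^{1/n} \leq O(r/\sqrt{n})$ and each cut multiplies $\Vol_n(K)$ by at most $\tfrac{3}{4}$, an elementary logarithm estimate shows that at most $O(n\log(nr/\lambda_1(\Lambda)))$ cuts can happen before the guard fails, which handles a single call. For the recursion on $(K \cap U, \Lambda \cap U)$, the body $K \cap U$ fits in the radius-$r$ ball of $U$, and the new lattice satisfies $\lambda_1(\Lambda \cap U) \geq \lambda_1(\Lambda)$ because $\Lambda \cap U \subseteq \Lambda$ (after identifying $U$ with a linear subspace via a lattice shift). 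Hence the per-call bound at recursion depth $k$, namely $O((n-k)\log((n-k)r/\lambda_1(\Lambda \cap U)))$, is dominated by $O(n\log(nr/\lambda_1(\Lambda)))$; summing over the at most $n$ recursion levels gives the announced total of $O(n^2 \log(nr/\lambda_1(\Lambda)))$.

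The main obstacle is keeping track of the several sources of polynomial-in-$n$ loss: Rogers--Shephard contributes $4^n$, Bourgain--Milman contributes a factor $C^n$, and Minkowski's second theorem contributes a $\sqrt{n}$ when passing from $\det(\Lambda)^{1/n}$ down to $\lambda_1(\Lambda)$. The crucial point is that each of these factors is at most $n^{O(1)}$ on the $n$-th root scale, so that they only add $O(n \log n)$ inside the logarithm, which is absorbed into the claimed bound. No single inequality is delicate on its own; the care is in confirming that their product still gives a clean $\log(nr/\lambda_1(\Lambda))$ dependence rather than leaking into an extra polynomial factor outside the logarithm.
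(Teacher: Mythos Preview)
Your proposal is correct and follows essentially the same approach as the paper: both arguments combine Minkowski's theorem, Bourgain--Milman, Rogers--Shephard, and Lemma~\ref{lem:DetOfSublatticeBound} to tie the guard condition to a volume bound, use Lemma~\ref{lem:VolumeDecrease} for the geometric volume decrease, and then sum the resulting $O(\tilde{n}\log(\tilde{n}r/\lambda_1(\Lambda)))$ per-level bound over at most $n$ recursion levels. The only cosmetic differences are that you phrase the volume argument contrapositively (guard holds $\Rightarrow$ $\Vol_n(K)$ is bounded below) and you track $\lambda_1(\Lambda \cap U) \geq \lambda_1(\Lambda)$ through the recursion, whereas the paper applies Lemma~\ref{lem:DetOfSublatticeBound} directly to each $\tilde{\Lambda}$ as a sublattice of the original $\Lambda$; these give the same bound.
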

\begin{proof}
  Consider any recursive run of the algorithm. The convex set will be of the form $\tilde{K} := K \cap U$
  and the lattice will be of the form $\tilde{\Lambda} := \Lambda \cap U$ where $U$ is a subspace
  and we denote $\tilde{n} := \dim(U)$. We think of $\tilde{K}$ and $\tilde{\Lambda}$ as $\tilde{n}$-dimensional
  objects. %Note that still $\tilde{K} \subseteq r B_2^{\tilde{n}}$.
  Let $\tilde{K}_t \subseteq \tilde{K}$
  be the convex body after $t$ iterations of the outer WHILE loop.
  Recall that $\Vol_{\tilde{n}}(\tilde{K}_t) \leq (\frac{3}{4})^t \cdot \Vol_{\tilde{n}}(\tilde{K})$ by Lemma~\ref{lem:VolumeDecrease}
  and $\Vol_{\tilde{n}}(\tilde{K}) \leq r^{\tilde{n}} \Vol_{\tilde{n}}(B_2^{\tilde{n}})$. Our goal is to show that for $t$ large enough, there is a non-zero lattice vector $y \in \tilde{\Lambda}^*$ with $\|y\|_{(\tilde{K}_t-\tilde{K}_t)^{\circ}} \leq \frac{1}{2}$ which then causes the algorithm to recurse, see Figure~\ref{fig:Recursion}. %, which then implies that all lattice points of $\tilde{K}_t \cap \tilde{\Lambda}$ lie in a single $(\tilde{n}-1)$-dimensional subspace with normal vector $y$.
  To prove existence of such a vector $y$, we use Minkowski's Theorem (Theorem~\ref{thm:MinkowskisTheorem}) followed by the Blaschke-Santal\'o-Bourgain-Milman Theorem (Theorem~\ref{thm:BSBM}) to obtain
  \begin{eqnarray*}
    \lambda_1( \tilde{\Lambda}^*, (\tilde{K}_t-\tilde{K}_t)^{\circ}) &\stackrel{\textrm{Thm~\ref{thm:MinkowskisTheorem}}}{\leq}& 2 \cdot \Big(\frac{\det(\tilde{\Lambda}^*)}{\Vol_{\tilde{n}}( (\tilde{K}_t-\tilde{K}_t)^{\circ})}\Big)^{1/\tilde{n}} \\ &\stackrel{\textrm{Thm~\ref{thm:BSBM}}}{\leq}& 2C \cdot \Big(\frac{\Vol_{\tilde{n}}(\tilde{K}_t-\tilde{K}_t)}{\det(\tilde{\Lambda}) \cdot \Vol_{\tilde{n}}(B_2^{\tilde{n}})^2} \Big)^{1/\tilde{n}} \\
                                                                     &\stackrel{\textrm{Thm~\ref{thm:RogersSheppard}}}{\leq}& 2 \cdot 4 \cdot \frac{\sqrt{\tilde{n}}}{2} \cdot C \Big(\frac{\Vol_{\tilde{n}}(\tilde{K}_t)}{\det(\tilde{\Lambda}) \cdot \Vol_{\tilde{n}}(B_2^{\tilde{n}})} \Big)^{1/\tilde{n}} \\
                                                                     &\leq& 4C\sqrt{\tilde{n}} \cdot r \cdot \frac{(3/4)^{t/\tilde{n}}}{\det(\tilde{\Lambda})^{1/\tilde{n}}} \\
    &\leq& 4 C \cdot \frac{\tilde{n} \cdot r}{\lambda_1(\Lambda)} \cdot (3/4)^{t/\tilde{n}}
  \end{eqnarray*}
  Here we use the convenient estimate of $\Vol_{\tilde{n}}(B_2^{\tilde{n}}) \geq \Vol_{\tilde{n}}(\frac{1}{\sqrt{\tilde{n}}}B_{\infty}^{\tilde{n}}) = (\frac{2}{\sqrt{\tilde{n}}})^{\tilde{n}}$.
  Moreover, we have used that by Lemma~\ref{lem:DetOfSublatticeBound} one has $\det(\tilde{\Lambda}) \geq (\frac{\lambda_1(\Lambda)}{\sqrt{\tilde{n}}})^{\tilde{n}}$. 
  Then $t = \Theta(\tilde{n} \log(\frac{\tilde{n}r}{\lambda_1(\Lambda)}))$ iterations suffice until $\lambda_1( \tilde{\Lambda}^*, (\tilde{K}_t-\tilde{K}_t)^{\circ}) \leq \frac{1}{2}$ and the algorithm recurses. % $\tilde{n} \leq n$ and there are at most $n$ recursion levels overall, the total number of
  Hence the total number of iterations of the outer WHILE loop over all recursion levels can be bounded by $O(n^2 \log(\frac{nr}{\lambda_1(\Lambda)}))$.
\end{proof}
\begin{figure}
 \begin{center}
  \includegraphics{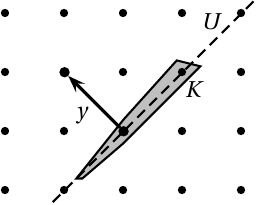}
\end{center}
  \caption{Visualization of lines (12)+(13) (with $n=2$ and $\Lambda=\setZ^2 = \Lambda^*$)\label{fig:Recursion}}
\end{figure}
The iteration bound of Lemma~\ref{lem:IterationNumber} can be improved by amortizing the volume reduction over the different recursion levels following the approach of Jiang~\cite{MinConvexFctWithIntegralMinimizers-Jiang-SODA21}. We refrain from that to keep our approach simple.

\subsection{Running times of the subroutines}
%Next, we verify that the subroutines are used correctly:

We have already seen that the number of iterations of the {\sc Cut-or-Average} algorithm is polynomially bounded.
Goal of this subsection is to prove that all used subroutines can be implemented in time that
is single-exponential or less. First we prove that steps (2)+(3) take polynomial time.
\begin{lemma}
  For any convex body $K \subseteq \setR^n$ one can compute the barycenter $c$  and
  a $\bm{0}$-centered ellipsoid $\pazocal{E}$ in randomized polynomial time so that $c + \pazocal{E} \subseteq K \subseteq c + (n+1) \pazocal{E}$.
\end{lemma}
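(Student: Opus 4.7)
The plan is to combine two ingredients: a classical existence statement about the inertia (Legendre) ellipsoid and a randomized polynomial-time procedure for estimating the first two moments of the uniform distribution on $K$.

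For the existence part, let $c = c(K)$ be the true barycenter of $K$ and consider the normalized covariance matrix
\[
  A \;=\; \frac{n+2}{\Vol_n(K)}\int_{K} (x-c)(x-c)^T\,dx,
\]
together with the ellipsoid $\pazocal{E}_0 = \{x \in \setR^n : x^T A^{-1} x \leq 1\}$. A classical fact in asymptotic convex geometry (see \cite{AsymptoticGeometricAnalysisBook2015}, the chapter on the Legendre/inertia ellipsoid of a log-concave measure) states that with this normalization one has $c + \pazocal{E}_0 \subseteq K \subseteq c + (n+1)\,\pazocal{E}_0$. So the ellipsoid we want is essentially prescribed by the first and second moments of the uniform distribution on $K$, and it remains only to compute them in randomized polynomial time.

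For the algorithmic part, I would invoke a polynomial-time near-uniform sampler for well-described convex bodies, e.g.\ the hit-and-run or ball-walk analyses of Dyer--Frieze--Kannan~\cite{dyer1991randomPolyTimeAlgoForApxVolumeInConvexBody} and the subsequent Kannan--Lovász--Simonovits refinements used by Bertsimas--Vempala~\cite{bertsimas2004solving}, to draw $N = \mathrm{poly}(n,1/\eta)$ samples $X_1,\dots,X_N$ from a distribution within total variation $\eta$ of the uniform distribution on $K$. From these, form the empirical mean $\hat c = \tfrac{1}{N}\sum_i X_i$ and empirical covariance $\hat A$. Standard concentration results for the empirical covariance of a log-concave isotropic distribution (Rudelson; Adamczak--Litvak--Pajor--Tomczak-Jaegermann) give $(1-\eta)A \preceq \hat A \preceq (1+\eta)A$ in the Loewner order once $N$ is polynomial in $n$, and a Chernoff/Hoeffding argument on $\hat c-c$ along with convexity gives $\|\hat c - c\|_{\pazocal{E}_0} \leq \eta$.

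To turn the bound $c + \pazocal{E}_0 \subseteq K \subseteq c + (n+1)\pazocal{E}_0$ into the claimed sandwich around $\hat c$ with the empirical ellipsoid, shrink $\pazocal{E}_0$ by $(1-O(\eta))$ and enlarge by $(1+O(\eta))$ to absorb both the covariance error and the centering error; choosing $\eta$ as a sufficiently small constant divided by $n$ keeps the ratio within $n+1$ after the slight slack is folded back in (one may, equivalently, start from the sharper classical bound $c + \sqrt{(n+2)/n}\,\pazocal{E}_0 \subseteq K$ and use the extra room). The main obstacle is precisely this error control in the second moment: showing that polynomially many near-uniform samples estimate the covariance in operator norm to within a $(1\pm O(1/n))$ factor. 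Everything else is routine plumbing around the log-concave sampler and the existence theorem for the inertia ellipsoid.
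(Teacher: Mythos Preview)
Your approach is essentially the same as the paper's: both compute the inertia/covariance ellipsoid by sampling from $K$ and then invoke the Kannan--Lov\'asz--Simonovits inequality (their Lemma~5.1) that a centered isotropic body is sandwiched between $B_2^n$ and $(n+1)B_2^n$; the paper simply phrases this as ``put $K$ in isotropic position'' rather than ``compute the Legendre ellipsoid,'' and relegates your entire error-control discussion to a footnote saying ``at least up to negligible error terms.''
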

\begin{proof}
  We say that a convex body $Q \subseteq \setR^n$ is  \emph{centered and isotropic} if the uniform random sample $X \sim Q$
  satisfies the following conditions: (i) $\E[X]=\bm{0}$ and (ii) $\E[XX^T] = I_n$. For any convex body $K$ one can compute
  an affine linear transformation $T : \setR^n \to \setR^n$ in polynomial time\footnote{At least up to negligible error terms.} so that $T(K)$ is centered and isotropic; this can be done for example by obtaining polynomially many samples of $X$, see \cite{DBLP:journals/rsa/KannanLS97, EstimatesForLogConcaveSamples2010}. A result by Kannan, Lov\'asz and Simonovits (Lemma 5.1 in \cite{DBLP:journals/rsa/KannanLS97}) then says that any such centered and isotropic body $T(K)$ satisfies $B_2^n \subseteq T(K) \subseteq (n+1) B_2^n$. Then $c := T^{-1}(\bm{0})$ and $\pazocal{E} := T^{-1}(B_2^n)-c$ satisfy the claim. 
\end{proof}

In order for the call of $\textsc{ApxIP}$ in step (7) to be efficient, we need that the symmetrizer of the
set is large enough volume-wise, see Theorem~\ref{thm:ApxCVP}. We will prove now that this is indeed the case. In particular for any parameters $2^{-\Theta(n)} \leq \rho \leq 0.99$ and $R \leq 2^{O(n)}$
we will have  $\textrm{Vol}_n((Q-\tilde{c}) \cap (\tilde{c}-Q)) \geq 2^{-\Theta(n)} \textrm{Vol}_n(Q)$ which suffices for our purpose.

\begin{lemma} \label{lem:VolumeForSymmetrizerOfQ}
  In step (7), the set $Q := \{ x \in K \mid \left<a,x\right> \geq \left<a,c+\frac{\rho}{2} d\right>\}$ and the point $\tilde{c} := c+\rho d$
  satisfy $\textrm{Vol}_n((Q-\tilde{c}) \cap (\tilde{c}-Q)) \geq (1-\rho)^n \cdot \frac{\rho}{2R} \cdot 2^{-n}  \cdot \textrm{Vol}_n(Q)$.
\end{lemma}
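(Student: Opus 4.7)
The plan is to use the affine invariance of the claim to normalize coordinates so that $\mathcal{E}=B_2^n$, $M=I_n$, and $c=\bm{0}$. Under this change $B_2^n\subseteq K\subseteq R\cdot B_2^n$, $\tilde{c}=\rho d$ for a unit vector $d$, and since $a=-M(z-c)\propto d$ the halfspace defining $Q$ simplifies to $\{\langle d,x\rangle\geq \rho/2\}$. Unpacking the two memberships gives the identity
\[
(Q-\tilde{c})\cap(\tilde{c}-Q) \;=\; \bigl[(K-\tilde{c})\cap(\tilde{c}-K)\bigr]\cap S, \qquad S:=\{y:|\langle d,y\rangle|\leq \rho/2\},
\]
so the task reduces to lower-bounding the $K$-symmetrizer at $\tilde{c}$ after cutting by this slab.

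Next I would show $(K-\tilde{c})\cap(\tilde{c}-K)\supseteq(1-\rho)(K\cap -K)$ by a one-line convexity argument: for any $y_0\in K\cap(-K)$, both $\tilde{c}\pm(1-\rho)y_0=\rho(d)+(1-\rho)(\pm y_0)$ are convex combinations of $d\in B_2^n\subseteq K$ and $\pm y_0\in K$, hence lie in $K$. A rescaling inside the intersection then gives $(1-\rho)(K\cap -K)\cap S=(1-\rho)\bigl[(K\cap -K)\cap S_r\bigr]$ with $r:=\rho/(2(1-\rho))$, so passing to $n$-dimensional volume contributes the factor $(1-\rho)^n$.

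The main quantitative step is to show that the symmetric convex body $\tilde{K}:=K\cap(-K)$ retains at least an $r/R$ fraction of its volume after intersection with $S_r$. I would use a standard one-dimensional slab lemma: for any symmetric log-concave $f$ on $[-T,T]$ and any $0\leq r\leq T$, one has $\int_{-r}^{r}f\geq(r/T)\int_{-T}^{T}f$, which holds because such an $f$ is non-increasing on $[0,T]$ and so has larger average on the narrower interval. Applied to the slice profile $f(t)=\textrm{Vol}_{n-1}(\tilde{K}_t)$, which is log-concave by Brunn--Minkowski and symmetric because $\tilde{K}=-\tilde{K}$, this yields $\textrm{Vol}_n(\tilde{K}\cap S_r)\geq(r/T)\textrm{Vol}_n(\tilde{K})\geq(r/R)\textrm{Vol}_n(\tilde{K})$, where $T\leq R$ is the $d$-half-width of $\tilde{K}$. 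The required hypothesis $r\leq T$ is ensured by $T\geq 1$ (since $B_2^n\subseteq\tilde{K}$) together with $r\leq 1$, which holds because $\rho\leq 2/3$.

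Finally, since $c=\bm{0}$ is the centroid of $K$, Milman--Pajor (Theorem~\ref{thm:MilmanPajorIneq}) gives $\textrm{Vol}_n(\tilde{K})\geq 2^{-n}\textrm{Vol}_n(K)\geq 2^{-n}\textrm{Vol}_n(Q)$. Chaining the three estimates,
\[
\textrm{Vol}_n((Q-\tilde{c})\cap(\tilde{c}-Q))\;\geq\;(1-\rho)^n\cdot\frac{r}{R}\cdot 2^{-n}\,\textrm{Vol}_n(Q)\;=\;(1-\rho)^{n-1}\cdot\frac{\rho}{2R}\cdot 2^{-n}\,\textrm{Vol}_n(Q),
\]
and $(1-\rho)^{n-1}\geq(1-\rho)^n$ gives the claimed bound. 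The only genuinely nontrivial ingredient I foresee is the one-dimensional slab inequality for symmetric log-concave densities; everything else is convex-combination bookkeeping plus an invocation of Milman--Pajor.
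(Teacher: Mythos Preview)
Your proof is correct and follows essentially the same approach as the paper: both use Milman--Pajor on the barycenter symmetrizer $K\cap(-K)$, a slab estimate for a symmetric convex body to control the intersection with $\{|\langle d,\cdot\rangle|\le\rho/2\}$, and the convex combination $\rho d+(1-\rho)(\pm y_0)\in K$ to transport the symmetrizer from $c$ to $\tilde{c}$. The only difference is organizational---the paper packages these steps by directly exhibiting the symmetric set $P=(1-\rho)\bigl[(K\cap(-K))\cap S_{\rho/2}\bigr]+\tilde{c}$ and checking $P\subseteq Q$, whereas you first write down the exact identity $(Q-\tilde{c})\cap(\tilde{c}-Q)=[(K-\tilde{c})\cap(\tilde{c}-K)]\cap S$ and then bound each piece, which incidentally yields the marginally sharper factor $(1-\rho)^{n-1}$ before you discard it.
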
%$
\begin{proof} %\todo{T: New proof due to Daniel. }
  Consider the symmetrizer $K' := (K-c) \cap (c-K)$ which has $\textrm{Vol}_n(K') \geq 2^{-n}\textrm{Vol}_n(K)$ by Theorem~\ref{thm:MilmanPajorIneq}
  as $c$ is the barycenter of $K$.
  Set $K'' := \{ x \in K' \mid |\left<a,x\right>| \leq \frac{\rho}{2}|\left<a,d\right>|\}$. As $K'$ is symmetric and all $x \in K'$ satisfy $|\left<a,x\right>| \leq R|\left<a,d\right>|$, we have $
    \textrm{Vol}_n(K'') \geq \frac{\rho}{2R} \textrm{Vol}_n(K') 
    $.
    Now consider
\begin{eqnarray*}
 P &:=& (1-\rho)(K''+c) + \rho (c+d) \\ &=& (1-\rho) K'' + (c+\rho d) \\
&\stackrel{(*)}{\subseteq}& K \cap \big\{ x \in \setR^n : \left<a,x\right> \geq \left<a,c + \frac{\rho}{2}d\right>\big\} = Q.
\end{eqnarray*}
\begin{figure}
  \begin{center}
   \includegraphics{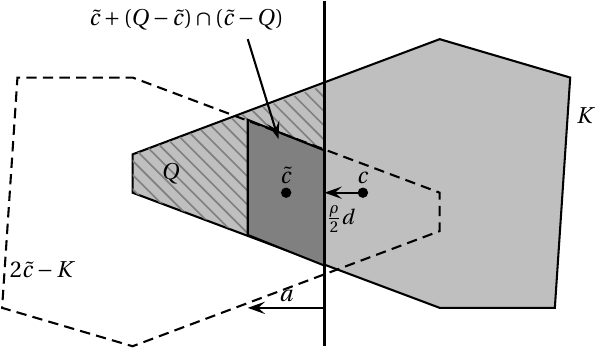}
 \end{center}
   \caption{Visualization of the proof of Lemma~\ref{lem:VolumeForSymmetrizerOfQ} where $\tilde{c} = c + \rho d$.}
 \end{figure}
  For the inclusion in $(*)$ we use that $K'' + c \subseteq K$ and $c+d \in K$;
moreover for any $x \in K''$ one has $\left<a,(1-\rho)x+c+\rho d\right> \geq \left<a,c+\frac{\rho}{2}d\right>$. Finally, $P$ is symmetric about $c+\rho d$ and hence
\[
 \textrm{Vol}_n((Q-\tilde{c}) \cap (\tilde{c}-Q)) \geq \textrm{Vol}_n(P) \geq (1-\rho)^n \cdot \frac{\rho}{2R} \cdot 2^{-n} \cdot \textrm{Vol}_n(Q)
\]
as $Q \subseteq K$.
\end{proof}
Step (10) can be done in polynomial time and we defer the analysis to Section~\ref{sec:an-asymm-appr}.
Step (12) corresponds to finding a shortest non-zero vector in a lattice w.r.t. norm $\| \cdot \|_{(K-K)^{\circ}}$ which can be done in time $2^{O(n)}$ using the Sieving algorithm~\cite{DBLP:conf/stoc/AjtaiKS01}.

\subsection{Conclusion on the Cut-Or-Average algorithm}

From the discussion above, we can summarize the performance of the algorithm in Figure~\ref{fig:CutOrAveAlgo} as follows: %\todo{T: I added this subsection.}
\begin{theorem} \label{thm:FullAlgorithmStatementI}
  Given a full rank matrix $B \in \setQ^{n \times n}$ and parameters $r>0$ and $\ell \geq 5(n+1)$ with $\ell \in \setN$ and a separation oracle for a closed convex set $K \subseteq r B_2^n$, there is a randomized
  algorithm that with high probability finds a point $x \in K \cap \frac{1}{\ell}\Lambda(B)$ or decides that $K \cap \Lambda(B) = \emptyset$. Here the running time is $2^{O(n)}$ times a polynomial in $\log(r)$ and the encoding length of $B$. 
\end{theorem}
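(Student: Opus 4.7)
The plan is to bolt together the pieces already developed: combine the per-iteration cost (dominated by the calls to $\textsc{ApxIP}$) with the global iteration bound from Lemma~\ref{lem:IterationNumber}, verify correctness of each of the three exit points of the algorithm (return at step (4), return at step (11), recurse at step (15)), and control the bit complexity so that $\log(1/\lambda_1(\Lambda))$ becomes polynomial in the encoding length of $B$. The running time then factors as (total outer iterations) $\times$ (inner iterations) $\times$ ($2^{O(n)}$ per oracle call) $= 2^{O(n)} \cdot \mathrm{poly}(\log r, \mathrm{enc}(B))$.

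\paragraph{Per-iteration cost.}
Every outer iteration performs: barycenter/approximating-ellipsoid computation (randomized polynomial time by the Kannan--Lov\'asz--Simonovits lemma cited above), at most $36R^2 = O(n^2)$ executions of the inner WHILE loop (Corollary after Lemma~\ref{lem:distance}), and one Asymmetric Approximate Carath\'eodory call on $|X| = O(n^2)$ points. The call at step (4) is efficient by Theorem~\ref{thm:MilmanPajorIneq}: the barycenter makes the symmetrizer of $K$ have volume at least $2^{-n}\Vol_n(K)$, so Theorem~\ref{thm:ApxCVP} gives a $2^{O(n)}$ bound. For the calls at step (7), I would plug $\rho = 1/(4n)$ and $R = n+1$ into Lemma~\ref{lem:VolumeForSymmetrizerOfQ} to get that the symmetrizer of $Q$ around $\tilde c = c + \rho d$ has volume at least $(1-\rho)^n \cdot \rho/(2R) \cdot 2^{-n}\Vol_n(Q) \geq 2^{-\Theta(n)}\Vol_n(Q)$, again activating the $2^{O(n)}$ regime of Theorem~\ref{thm:ApxCVP}. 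Finally, the short dual vector sought in step (12) can be found in time $2^{O(n)}$ by running a shortest-vector routine on the symmetric body $(K-K)^\circ$ with respect to $\Lambda^*$. Hence each outer iteration costs $2^{O(n)}$ up to polynomial factors.

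\paragraph{Global iteration count and bit complexity.}
Lemma~\ref{lem:IterationNumber} bounds the total number of outer iterations across all recursion levels by $O(n^2 \log(nr/\lambda_1(\Lambda)))$. To turn this into a polynomial in $\log r$ and the encoding length of $B$, I would use the standard bound $\lambda_1(\Lambda(B)) \geq \sigma_{\min}(B)$ together with Hadamard/Cramer-type estimates, giving $\log(1/\lambda_1(\Lambda)) = O(\mathrm{enc}(B))$. The bit complexity of the discovered hyperplanes in steps (12)--(13) is then controlled by Theorem~\ref{thm:FrankTardos}, applied with $\Delta$ chosen from the a~priori bounding box, so that the recursive instance $\textsc{Cut-Or-Average}(K \cap U, \Lambda \cap U, \ell)$ still has an oracle and encoding length polynomial in the original. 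Summing over recursion levels multiplies by at most $n$, which is absorbed into the polynomial factor.

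\paragraph{Correctness and main obstacle.}
If the algorithm returns at step (4) with \textsc{empty}, then Theorem~\ref{thm:ApxCVP} certifies $K \cap \Lambda = \emptyset$. If it returns at step (11), the Asymmetric Approximate Carath\'eodory lemma (invoked once $\|c - z\|_{\pazocal{E}} \leq 1/4$) produces weights $\mu \in \frac{1}{\ell}\setZ_{\geq 0}^X$ summing to $1$ with $\sum_x \mu_x x \in K$; since each $x \in X \subseteq \Lambda$, the output lies in $K \cap \tfrac{1}{\ell}\Lambda$. The interesting branch is the exit through step (12)--(15): the condition $\|y\|_{(K-K)^\circ} \leq \tfrac{1}{2}$ translates to $\max_{x_1,x_2 \in K}\langle y, x_1 - x_2\rangle \leq \tfrac{1}{2}$, so the interval $\{\langle y, x\rangle : x \in K\}$ has length $\leq \tfrac{1}{2}$; since $\langle y, x\rangle \in \setZ$ for every $x \in \Lambda$, all lattice points of $K$ share a single integer value $\beta$, validating step (13), and the recursion on $(K \cap U, \Lambda \cap U)$ is a faithful reduction to $\dim = n-1$. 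The main obstacle, to my mind, is interleaving the probabilistic guarantees: each invocation of $\textsc{ApxIP}$ and of the barycenter/isotropization subroutine succeeds only with high probability, so I would fix the failure probability per call to $1/\mathrm{poly}(2^n \log(nr/\lambda_1(\Lambda)))$ and take a union bound over all $2^{O(n)} \cdot \mathrm{poly}(\log r, \mathrm{enc}(B))$ oracle calls to obtain the ``with high probability'' conclusion stated in the theorem.
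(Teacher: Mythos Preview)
Your proposal is correct and takes essentially the same approach as the paper. The paper presents Theorem~\ref{thm:FullAlgorithmStatementI} as a summary statement with no separate proof, relying on the reader to synthesize Lemmas~\ref{lem:XisIn3ScalingOfK}--\ref{lem:VolumeForSymmetrizerOfQ} and the Carath\'eodory lemma; your write-up carries out exactly that synthesis, with the added care of making explicit the bit-complexity bound on $\lambda_1(\Lambda)$ and the union bound over randomized subroutine calls, both of which the paper leaves implicit. One minor remark: you do not need to invoke Frank--Tardos for the recursion in steps (12)--(13), since the hyperplane normal $y$ is an element of the dual lattice $\Lambda^*$ and hence already has encoding length polynomial in that of $B$.
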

This can be easily turned into an algorithm to solve integer linear programming:
\begin{theorem} \label{thm:FullAlgorithmStatementII}
  Given a full rank matrix $B \in \setQ^{n \times n}$, a parameter $r>0$ and a separation oracle
  for a closed convex set $K \subseteq r B_2^n$, there is a randomized
  algorithm that with high probability finds a point $x \in K \cap \Lambda(B)$ or decides that
  there is none. The running time is $2^{O(n)}n^n$ times a polynomial in $\log(r)$
  and the encoding length of $B$.
\end{theorem}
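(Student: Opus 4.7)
The plan is to reduce Theorem~\ref{thm:FullAlgorithmStatementII} to Theorem~\ref{thm:FullAlgorithmStatementI} by enumerating over coset representatives of $\Lambda/\ell\Lambda$, following the outline sketched in contribution (b) of the introduction. Set $\ell := 5(n+1)$. Since $|\Lambda/\ell\Lambda| = \ell^n$, the set of coset representatives can be taken to be $\{B\alpha : \alpha \in \{0,1,\ldots,\ell-1\}^n\}$, and its cardinality is $(5(n+1))^n = 2^{O(n)} \cdot n^n$.

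For each such representative $v$, I run the algorithm of Theorem~\ref{thm:FullAlgorithmStatementI} on the convex body $K' := \tfrac{1}{\ell}(K-v)$, the lattice $\Lambda = \Lambda(B)$, and the parameter $\ell$. If the call returns $z \in K' \cap \tfrac{1}{\ell}\Lambda$, I output $x := \ell z + v$; note that $\ell z \in \Lambda$, so $x \in \Lambda$, and $\ell z \in K - v$, so $x \in K$. If every call returns $\textsc{empty}$, I output $K \cap \Lambda = \emptyset$.

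Correctness of the reduction is the short key observation. Suppose $x^* \in K \cap \Lambda$. Choose the (unique) coset representative $v^*$ with $x^* - v^* \in \ell\Lambda$, and let $y^* := (x^*-v^*)/\ell \in \Lambda$. Then $y^* \in \tfrac{1}{\ell}(K-v^*) \cap \Lambda$, so for the choice $v = v^*$ the precondition of Theorem~\ref{thm:FullAlgorithmStatementI} holds and the call returns a point rather than $\textsc{empty}$ with high probability. Conversely, if $K \cap \Lambda = \emptyset$, then for every $v$ we have $\tfrac{1}{\ell}(K-v) \cap \Lambda = \emptyset$ (else combining with $v$ would produce a point of $K \cap \Lambda$), so each subroutine correctly returns $\textsc{empty}$.

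For the complexity, a representative $v$ satisfies $\|v\|_2 \leq \ell n \cdot \max_i \|b_i\|_2$, hence $K' \subseteq r' B_2^n$ with $\log r'$ polynomial in $\log r$, $\log \ell$, and the encoding length of $B$. Theorem~\ref{thm:FullAlgorithmStatementI} therefore runs in time $2^{O(n)}$ times a polynomial in the input size per call, and multiplying by the $\ell^n = 2^{O(n)} n^n$ outer iterations gives the claimed bound. High overall success probability is secured by standard amplification of each individual call and a union bound over the $2^{O(n)} n^n$ cosets. The main conceptual step is the rescaling $K \mapsto \tfrac{1}{\ell}(K-v)$ that converts "find a lattice point in $K$ with prescribed residue $v$" into the kind of problem Theorem~\ref{thm:FullAlgorithmStatementI} solves; no step here is technically difficult, since all the heavy lifting is already contained in the Cut-Or-Average analysis.
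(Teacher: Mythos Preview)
Your proof is correct and follows essentially the same approach as the paper: enumerate the $\ell^n$ cosets of $\Lambda/\ell\Lambda$ and invoke Theorem~\ref{thm:FullAlgorithmStatementI} once per coset. The only cosmetic difference is that you rescale the body to $K' = \tfrac{1}{\ell}(K-v)$ and keep the lattice $\Lambda$, whereas the paper keeps $K$ and passes the shifted, rescaled lattice $v + \ell\Lambda$; these are equivalent under the affine map $x \mapsto \ell x + v$.
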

\begin{proof}
  Suppose that $K \cap \Lambda \neq \emptyset$ and fix an (unknown) solution
  $x^* \in K \cap \Lambda$.
  We set $\ell := \lceil 5(n+1) \rceil$. We iterate through all $v \in \{ 0,\ldots,\ell-1\}^n$ and run Theorem~\ref{thm:FullAlgorithmStatementI} on the set $K$ and the shifted lattice $v+\ell \Lambda$. For the outcome of $v$ with $x^* \equiv v \mod \ell$
  one has $K \cap (v+\ell \Lambda) \neq \emptyset$ and so the algorithm will discover
  a point $x \in K \cap (v+\Lambda)$.
\end{proof}

\section{An asymmetric Approximate Carath\'eodory Theorem}
\label{sec:an-asymm-appr}

In this section we show correctness of (10) and prove that given lattice points $X \subseteq \Lambda$
that are contained in a in a 3-scaling of $K$ and satisfy $c \in \textrm{conv}(X)$, we can find a point in $\frac{\Lambda}{\ell} \cap K$.
The \emph{Approximate Carath\'eodory Theorem} states the following.
\begin{quote}
  Given any point-set $X \subseteq B_2^n$ in the unit ball with $\bm{0} \in \textrm{conv}(X)$ and a parameter $k \in \setN$,
  there exist $u_1,\ldots,u_k \in X$ (possibly with repetition) such that
  \begin{displaymath}
    \left\|\frac{1}{k}\sum_{i=1}^k u_i\right \|_2 \leq O\left(1/\sqrt{k}\right).
  \end{displaymath}
\end{quote}
The theorem is proved, for example, by Novikoff~\cite{novikoff1963convergence} in the context of the \emph{perceptron algorithm}.  An  $ℓ_p$-version  was provided by Barman~\cite{barman2015approximating} to find Nash equilibria. Deterministic and nearly-linear time methods to find the convex combination were recently described in~\cite{mirrokni2017tight}. 
In the following, we provide  a generalization to asymmetric convex bodies and  the dependence on $k$ will be weaker but sufficient for our analysis of our {\sc Cut-or-Average} algorithm from Section~\ref{sec:cut-or-average}.

Recall that with a symmetric convex body $K$, we one can associate
the \emph{Minkowski norm} ${\| \cdot \|_K}$ with $\|x\|_K = \inf\{ s \geq 0 \mid x \in sK\}$.
In the following we will use the same definition also for an arbitrary convex set $K$
with $\bm{0} \in K$. Symmetry is not given but one still has $\|x+y\|_K \leq \|x\|_K + \|y\|_K$ for all
$x,y \in \setR^n$ and $\|\alpha x\|_K = \alpha\|x\|_K$ for $\alpha \in \setR_{\geq 0}$.
Using this notation we can prove the main result of this section.

\begin{lemma} \label{lem:AsymApxCaratheodory}

  Given a point-set $X \subseteq K$ contained in a convex set $K ⊆ ℝ^n$ with $\bm{0} \in \textrm{conv}(X)$ and a parameter $k \in \setN$,
  there exist $u_1,\ldots,u_k \in X$ (possibly with repetition) so that
  \begin{displaymath}
    \left\|\frac{1}{k}\sum_{i=1}^k u_i\right\|_K  \leq {\min\{|X|,n+1\}}/ {k}. 
  \end{displaymath}
%  Let $K \subseteq \setR^n$ be a convex set and let $v_1,\ldots,v_m \in K$ be vectors with $\bm{0} \in \textrm{conv}\{ %v_1,\ldots,v_m\}$. Then
%  for any $k \in \setN$ there is a vector $\mu \in \frac{\setZ_{\geq 0}^m}{k}$ with $\sum_{i=1}^m \mu_i = 1$
%  such that
%  \begin{displaymath}
%    \|\sum_{i=1}^m \mu_iv_i\|_K \leq {\min\{m,n+1\}}/ {k}.
%  \end{displaymath}
  Moreover, given $X$ as input, the points $u_1,\ldots,u_k$ can be found in time polynomial in $|X|$, $k$ and $n$.
\end{lemma}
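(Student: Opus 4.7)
The plan is to reduce to a Carath\'eodory-sized subset and then construct the $u_i$'s by a tailored rounding of the convex-combination coefficients of $\mathbf{0}$. First I would invoke the classical Carath\'eodory theorem to replace $X$ by a subset $X' \subseteq X$ of size at most $n+1$ still satisfying $\mathbf{0} \in \conv(X')$. Writing $m := |X'| = \min\{|X|,n+1\}$ and fixing $\mathbf{0} = \sum_{i=1}^m \lambda_i x_i$ with $\lambda_i \geq 0$ and $\sum_i \lambda_i = 1$, the case $k \leq m$ is trivial because any choice gives $\tfrac{1}{k}\sum u_i \in \conv(X')\subseteq K$ and hence $\|\tfrac{1}{k}\sum u_i\|_K \leq 1 \leq m/k$, so I restrict to $k \geq m$.

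The heart of the argument is the choice of multiplicities. I would pick integers $n_1,\ldots,n_m \geq 0$ with $\sum_i n_i = k$ satisfying the crucial lower bound $n_i \geq (k-m)\lambda_i$ for every $i$. Such an $n$ exists: the starting point $n_i := \lceil (k-m)\lambda_i \rceil$ gives $\sum_i n_i \in [k-m,\,k]$ (each ceiling loses at most $1$ and there are $m$ terms), and the at-most-$m$ leftover units can be added to arbitrary coordinates without breaking any lower bound. The output is then $u_1,\ldots,u_k$ in which each $x_i$ appears with multiplicity $n_i$.

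The step I expect to be the main conceptual observation, and which bypasses the asymmetry of $\|\cdot\|_K$ entirely, is the following algebraic rewriting. Setting $\alpha := 1 - m/k$ and $\gamma_i := n_i/k - \alpha\lambda_i$, the lower bound $n_i \geq (k-m)\lambda_i$ is exactly $\gamma_i \geq 0$, and $\sum_i \gamma_i = 1 - \alpha = m/k$. Using $\mathbf{0} = \sum_i \lambda_i x_i$ to add a free copy of $\alpha\cdot\mathbf{0}$ yields
\[
\frac{1}{k}\sum_{i=1}^k u_i \;=\; \sum_{i=1}^m \frac{n_i}{k}\,x_i \;=\; \sum_{i=1}^m \gamma_i\,x_i \;+\; \alpha\,\sum_{i=1}^m \lambda_i x_i \;=\; \frac{m}{k}\cdot \sum_{i=1}^m \frac{\gamma_i}{m/k}\,x_i.
\]
The inner sum is a convex combination of points of $X \subseteq K$ and therefore lies in $K$, so $\tfrac{1}{k}\sum u_i \in \tfrac{m}{k}K$, i.e.\ $\|\tfrac{1}{k}\sum u_i\|_K \leq m/k$.

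The algorithmic claim is immediate: the Carath\'eodory reduction and the computation of $\lambda$ are polynomial-time LP tasks, and the rounding step is linear in $m$. The only real ``obstacle'' is spotting the correct lower bound $n_i \geq (k-m)\lambda_i$, because once this is done, subtracting the free copy $\alpha\cdot\mathbf{0}$ collapses the problem onto the trivial containment $\conv(X)\subseteq K$, and no Frank--Wolfe, subgradient, or duality argument for asymmetric gauges is required.
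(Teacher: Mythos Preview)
Your proposal is correct and follows essentially the same route as the paper: reduce via Carath\'eodory to $m=\min\{|X|,n+1\}$ points, choose integer multiplicities with the key lower bound $n_i \geq (k-m)\lambda_i$ summing to $k$, and subtract the free copy of $(k-m)\cdot\mathbf{0}=\sum_i(k-m)\lambda_i x_i$ to expose a residual of total weight $m$. The only cosmetic difference is in the last step: the paper bounds $\big\|\sum_i(\mu_i-(k-m)\lambda_i)x_i\big\|_K\leq m$ via subadditivity and $\|x_i\|_K\leq 1$, whereas you observe directly that the residual is $\tfrac{m}{k}$ times a convex combination of points of $K$; these are equivalent.
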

  \begin{proof}
    Let $ℓ = \min\{|X|, n+1\}$.  The claim is true whenever $k \leq ℓ$
    since then we may simply pick an arbitrary point in $X$.  Hence
    from now on we assume $k>ℓ$.

    By Carathéodory's theorem, there
    exists a convex combination of zero, using $ℓ$ elements of $X$.
    We write $\bm{0} = \sum_{i=1}^ℓ \lambda_i v_i$ where $v_i ∈X$,
    $\lambda_i \geq 0$ for $i \in [ℓ]$ and
    $\sum_{i=1}^ℓ \lambda_i = 1$.
    Consider the numbers $L_i = (k-ℓ) λ_i +1$. Clearly, $∑_{i=1}^ℓ L_i = k.$ This implies that there exists an integer vector $μ ∈ ℕ^ℓ$ with $μ ≥ (k-ℓ) λ$ and $∑_{i=1}^ℓμ_i = k$.
   It remains to show that we have
   \begin{displaymath}
    \left\|\frac{1}{k}\sum_{i=1}^ℓ μ_i v_i \right\|_K  \leq ℓ / {k}.  
  \end{displaymath}
    In fact, one has
   \begin{eqnarray*}
     \Big\|\sum_{i=1}^ℓ \mu_iv_i\Big\|_K &=& \Big\|\sum_{i=1}^ℓ \underbrace{(\mu_i-(k-ℓ)\lambda_i)}_{\geq 0}v_i +\underbrace{(k-ℓ)}_{\geq 0} \sum_{i=1}^ℓ \lambda_iv_i\Big\|_K \\
                                         &\leq& \sum_{i=1}^ℓ (\mu_i-(k-ℓ)\lambda_i) \underbrace{\|v_i\|_K}_{\leq 1} + (k-ℓ) \underbrace{\Big\|\sum_{i=1}^ℓ \lambda_iv_i\Big\|_K}_{=0}\\
     & \leq & ℓ. 
   \end{eqnarray*}
   For the moreover part, note that the coefficients $\lambda_1,\ldots,\lambda_{\ell}$
   are the extreme points of a linear program which can be found in polynomial time.
   Finally, the linear system $\mu \geq \lceil (k-\ell) \lambda \rceil, \sum_{i=1}^{\ell} \mu_i = k$ has a totally unimodular constraint matrix and the right hand side is integral, hence any extreme point solution is integral as well, see e.g.~\cite{TheoryOfLPandIP-Schrijver1999}.
\end{proof}

\begin{lemma}
  For any integer $\ell \geq 5(n+1)$, the convex combination $\mu$ computed in line (10)
  satisfies $\sum_{x \in X} \mu_x x \in K$.
\end{lemma}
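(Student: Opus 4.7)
\emph{Plan.} The goal is to show $\sum_{x \in X}\mu_x x \in K$, equivalently $\|\sum_x \mu_x x - c\|_{K-c} \leq 1$, where $c$ is the barycenter of $K$. I would start from the decomposition
\[
\sum_{x \in X}\mu_x x - c \;=\; \sum_{x \in X}\mu_x (x - z) \;+\; (z - c),
\]
where $z = \tfrac{1}{|X|}\sum_{x \in X} x$ is the running average on exiting the WHILE loop of line (5). The exit condition gives $\|z - c\|_{\mathcal{E}} \leq \tfrac{1}{4}$, and since $\mathcal{E} \subseteq K - c$ (from $c + \mathcal{E} \subseteq K$) this yields $\|z - c\|_{K-c} \leq \tfrac{1}{4}$; symmetry of $\mathcal{E}$ likewise gives $c - z \in \tfrac{1}{4}(K-c)$. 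So the second term is already under control, and the task reduces to choosing $\mu$ that makes the first term have $(K-c)$-norm at most $\tfrac{3}{4}$.

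For the first term I would apply the Asymmetric Approximate Carath\'eodory theorem (Lemma~\ref{lem:AsymApxCaratheodory}) to the shifted set $\tilde{X} := \{x - z : x \in X\}$. Its unweighted average is $\bm{0}$, so $\bm{0} \in \conv(\tilde{X})$. By Lemma~\ref{lem:XisIn3ScalingOfK}, each $x - c$ lies in $3(K-c)$; combining with $c - z \in \tfrac{1}{4}(K-c)$, convexity of $K - c$ forces
\[
x - z \;=\; (x - c) + (c - z) \;\in\; 3(K-c) + \tfrac{1}{4}(K-c) \;\subseteq\; \tfrac{13}{4}(K-c),
\]
so $\tilde X \subseteq \tfrac{13}{4}(K-c)$. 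Lemma~\ref{lem:AsymApxCaratheodory} with $k = \ell$ then produces $\mu \in \tfrac{1}{\ell}\setZ_{\geq 0}^X$ with $\sum_x \mu_x = 1$ and $\|\sum_x \mu_x (x - z)\|_{K-c} \leq \tfrac{13(n+1)}{4\ell}$, using the $n+1$ branch of the minimum.

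Combining the two estimates,
\[
\Big\|\sum_{x \in X}\mu_x x - c\Big\|_{K-c} \;\leq\; \frac{13(n+1)}{4\ell} + \frac{1}{4},
\]
and for $\ell \geq 5(n+1)$ the right-hand side is at most $\tfrac{13}{20} + \tfrac{1}{4} = \tfrac{9}{10} < 1$, which gives $\sum_x \mu_x x \in K$ as desired. The only delicate step is the set-sum inclusion $3(K-c) + \tfrac{1}{4}(K-c) \subseteq \tfrac{13}{4}(K-c)$, which is immediate from the convexity of $K - c$; the constant $5$ in $\ell \geq 5(n+1)$ is precisely what is needed to keep the sum of the two error terms strictly below $1$ with a small margin (and any $\varepsilon$-improvement would trade off against this margin).
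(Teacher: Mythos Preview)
Your proof is correct and follows essentially the same route as the paper: translate to $c=\bm{0}$, bound $\|x-z\|_{K-c}\le \tfrac{13}{4}$ via $x\in 3(K-c)$ and $-z\in\tfrac14\mathcal{E}\subseteq\tfrac14(K-c)$, apply the Asymmetric Approximate Carath\'eodory lemma, and add the $\tfrac14$ coming from $\|z\|_{K-c}$. The only cosmetic difference is that the paper first selects $m\le n+1$ points containing $z$ in their convex hull via classical Carath\'eodory before invoking the lemma, whereas you apply the lemma directly to all of $\tilde X$ and use the $\min\{|X|,n+1\}$ branch of its bound; also note that your claim ``$z$ is the unweighted average of $X$'' is not literally guaranteed if points repeat, but the needed fact $z\in\conv(X)$ (hence $\bm{0}\in\conv(\tilde X)$) is immediate from the update rule in line~(9).
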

\begin{proof}
  We may translate the sets $X$ and $K$ so that $c=\bm{0}$ without affecting the claim. Recall that $z \in \textrm{conv}(X)$.
  By Carath\'eodory's Theorem there are $v_1,\ldots,v_m \in X$ with $m \leq n+1$ so that $z \in \textrm{conv}\{v_1,\ldots,v_m\}$
  and so $\bm{0} \in \textrm{conv}\{ v_1-z,\ldots,v_m-z\}$. We have $v_i \in 3K$ by Lemma~\ref{lem:XisIn3ScalingOfK} and $-z \in \frac{1}{4}\pazocal{E} \subseteq \frac{1}{4}K$ as well as $z \in \frac{1}{4}K$.
  Hence $\|v_i-z\|_K \leq \|v_i\|_K + \|-z\|_K \leq \frac{13}{4}$. We apply Lemma~\ref{lem:AsymApxCaratheodory}
  and obtain a convex combination $\mu \in \frac{\setZ_{\geq 0}^m}{\ell}$ with $\|\sum_{i=1}^m \mu_i(v_i-z)\|_{\frac{13}{4}K} \leq \frac{m}{\ell}$.
    Then
    \[
      \Big\|\sum_{i=1}^m \mu_i v_i\Big\|_K \leq \Big\|\sum_{i=1}^m \mu_i(v_i-z)\Big\|_K + \underbrace{\|z\|_K}_{\leq 1/4} \leq  \frac{13}{4} \frac{m}{\ell} + \frac{1}{4} \leq 1
    \]
    if $\ell \geq \frac{13}{3}m$. This is satisfies if $\ell \geq 5(n+1)$.
\end{proof}

\section{IPs with polynomial variable range}
\label{sec:ips-with-polynomial}

Now we come to our second method that reduces \IP to \aIP that applies
to integer programming in \emph{standard equation form}
\begin{equation}
  \label{eq:1}
   Ax =  b, \,x \in \setZ^n, \, 0 ≤ x_i≤ u_i, \, i=1,\dots,n , 
\end{equation}
Here, $A ∈ ℤ^{m ×n}$, $b ∈ℤ^m$, and the $u_i∈ ℕ_{+}$ are positive integers
that bound the variables from above. Our main goal is to prove the
following theorem.

\begin{theorem}
  \label{thr:1}
  The integer feasibility problem~\eqref{eq:1} can be solved in time $2^{O(n)} \prod_{i=1}^n \log_2 (u_i+1)$. 
\end{theorem}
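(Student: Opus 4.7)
The plan is to reduce the integer feasibility problem~\eqref{eq:1} to $2^{O(n)} \cdot \prod_i O(\log_2(u_i+1))$ instances of \aIP, each solvable in $2^{O(n)}$ time by Theorem~\ref{thm:ApxCVP} (using the barycenter as the center, whose symmetrizer condition is granted by Theorem~\ref{thm:MilmanPajorIneq}), yielding the claimed total running time.

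The enumeration is over two kinds of choices. First, a \emph{mode vector} $m \in \{0,1\}^n$, contributing the $2^{O(n)}$ factor: via the substitution $x_i \mapsto u_i - x_i$ whenever $m_i = 1$, we may assume the sought solution $x^*$ lies in a ``lower portion'' of $[0, u_i]$ in every coordinate. Second, a \emph{dyadic profile} $(k_1,\dots,k_n)$ with $k_i \in \{0,1,\dots,O(\log_2(u_i+1))\}$ identifying a dyadic sub-interval (namely $\{0\}$ or $[a_{k_i}, 3 a_{k_i}]$ with $a_k = 3^{k-1}$) in which the transformed $x^*_i$ is supposed to lie. These choices contribute $2^{O(n)} \cdot \prod_i O(\log_2(u_i+1))$ pairs in total.

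For each (mode, profile) pair I would apply the corresponding substitution to obtain an equivalent system $A' x' = b',\ 0 \le x'_i \le u_i$, form the convex body
\[
K_{(m,k)} \;:=\; \bigl\{x' \in \setR^n : A'x' = b',\ a_{k_i} \le x'_i \le 3\,a_{k_i}\ \forall i\bigr\},
\]
and invoke $\textsc{ApxIP}(K_{(m,k)}, c, \setZ^n)$ with $c$ its barycenter. The crucial geometric observation is that the $2$-scaling of $[a, 3a]$ around its center $2a$ is exactly $[0, 4a]$: so whenever $4 a_{k_i} \le u_i$ for all $i$, the $2$-scaling of $K_{(m,k)}$ stays inside $[0, u]$ in every coordinate. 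Since the barycenter $c$ satisfies $A'c = b'$, any integer $y'$ returned by \textsc{ApxIP} must satisfy $A'y' = b'$; combined with $0 \le y' \le u$, inverting the substitution yields a valid integer solution of~\eqref{eq:1}. If instead \textsc{ApxIP} returns \textsc{empty}, no solution with the current (mode, profile) exists.

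The main obstacle is ensuring that for every potential $x^* \in K \cap \setZ^n$, at least one (mode, profile) pair captures $x^*$ inside the ``valid'' range $a_{k_i} \le x^*_i \le 3 a_{k_i}$ with $4 a_{k_i} \le u_i$. The dyadic family $a_k = 3^{k-1}$ only covers $\{0\} \cup [1, 3u_i/4]$ per coordinate, leaving the upper quarter $(3u_i/4, u_i]$ uncovered; this is exactly the role of the mode bit $m_i$, whose reflection $x_i \mapsto u_i - x_i$ maps any $x^*_i$ in the upper quarter back into the covered range. A careful case analysis, possibly involving a constant number of shifted dyadic families per coordinate in addition to the $2^n$ reflections, verifies the full cover; the surplus $2^{O(n)}$ factor from the modes merges with the $2^{O(n)}$ cost per \textsc{ApxIP} call to give the overall $2^{O(n)} \prod_i \log_2(u_i+1)$ bound.
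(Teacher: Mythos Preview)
Your overall reduction strategy is right, but the central geometric step has a gap. You compute the $2$-scaling of $[a,3a]$ around the \emph{box center} $2a$, obtaining $[0,4a]$. However, $\textsc{ApxIP}$ scales $K_{(m,k)}$ from its \emph{barycenter} $c$, and since $K_{(m,k)}$ is the intersection of the box with the affine subspace $\{A'x'=b'\}$, that barycenter can sit anywhere in the box. For an arbitrary $c_i\in[a,3a]$ the $2$-scaling of $[a,3a]$ from $c_i$ is $[2a-c_i,\,6a-c_i]\subseteq[-a,5a]$; in particular the lower end can be as small as $-a$. Since $a=3^{k-1}$ may be large, $\textsc{ApxIP}$ can legitimately return an integer point with $x'_i<0$, and inverting the substitution does \emph{not} yield a feasible solution of~\eqref{eq:1}. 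Your condition $4a_{k_i}\le u_i$ repairs neither the lower side nor even the upper side (which now requires $5a_{k_i}\le u_i$). The issue is structural: with ratio-$3$ intervals one has $\beta-\alpha=2\alpha>\alpha$, so the width exceeds the distance to $0$ and no choice of center is safe.

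The paper's argument avoids this by using ratio-$2$ intervals and by overlaying the two dyadic families \emph{simultaneously} rather than enumerating a reflection mode. One takes the hyperplane arrangement generated by $x_i=2^{j-1}$ and $x_i=u_i-2^{j-1}$ for $0\le j\le\lceil\log_2 u_i\rceil$, together with $x_i=0$ and $x_i=u_i$. Every cell $[\alpha,\beta]$ in coordinate $i$ then satisfies both $\beta-\alpha\le\alpha$ (from the first family) and $\beta-\alpha\le u_i-\beta$ (from the second), so for \emph{any} $v,w\in[\alpha,\beta]$ one gets $2w-v\in[-\tfrac{1}{2},u_i+\tfrac{1}{2}]$, which for integer points means $0\le x_i\le u_i$. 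This is precisely Lemma~\ref{lem:2} (the reflection-set trick of Cook et al.), and it is what makes the $2$-scaling from the unknown barycenter stay feasible. With that change your scheme goes through, and the separate $2^n$ mode enumeration becomes unnecessary: the number of cells is already $\prod_i O(\log_2(u_i+1))$.
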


We now describe the algorithm. It is again based on the approximate integer programming technique of Dadush~\cite{ApproximateIP-DadushAlgorithmica2014}. We exploit it to solve integer programming exactly  via the technique of \emph{reflection sets} developed by Cook et al.~\cite{cook1992integer}.  For each $i = 1,\dots,n$ we consider the two families of hyperplanes  that slice the feasible region with the shifted lower and upper bounds respectively 
\begin{equation}
  \label{eq:2} 
  x_i = 2^{j-1} \text{ and } x_i = u_i -  2^{j-1}, \, 0 ≤ j ≤ ⌈\log_2(u_i)⌉. 
\end{equation}
Following~\cite{cook1992integer}, we consider two points $w,v$ that
lie in the region between two consecutive planes  
$x_i = 2^{j-1}$ and $x_i = 2^{j}$ for some $j$, see Figure~\ref{fig:1}.
\begin{figure}[h]
  \centering
  \includegraphics{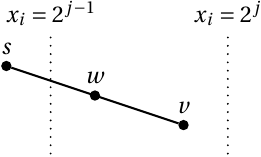} 
  \caption{The reflection set.}
  \label{fig:1}
\end{figure}
Suppose that $w_i ≤v_i$ holds. Let $s$ be the point such that $w = 1/2(s + v)$. The line-segment $s,v$ is
the line segment $w,v$ scaled by a factor of $2$ from $v$. Let us consider what can be said about the $i$-th component of $s$.
Clearly $s_i ≥ 2^{j-1} - (2^j -2^{j-1}) = 0$. Similarly, if $w$ and $v$ lie in the region in-between $x_i = 0$
and $x_i = 1/2$, then $s_i ≥ -1/2$. We conclude with the following observation. 

\begin{lemma}
  \label{lem:2}
  Consider the hyperplane arrangement defined by the
  equations~\eqref{eq:2} as well as by  $x_i = 0$ and $x_i = u_{i}$ for $1≤i≤n$. 
  Let $K⊆ ℝ^n$ a cell of this hyperplane arrangement and $v ∈
  K$. If $K'$ is the result of scaling $K$ by a factor of $2$ from
  $v$, i.e. 
\begin{displaymath}
     K' = \{ v + 2(w-v) \mid w ∈ K\},
   \end{displaymath}
   then $K'$ satisfies the inequalities $-1/2≤ x_i ≤ u_i + 1/2$ for all $1≤i≤n$.
 \end{lemma}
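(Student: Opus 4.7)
The plan is to reduce the claim to a one-dimensional statement per coordinate. Fix $i$ and any $w \in K$, and set $s := v + 2(w-v)$, so that $s_i = 2w_i - v_i$. Since $K$ is a cell of the hyperplane arrangement, the projection of $K$ onto the $i$-th axis is an interval $[a,b]$ between two consecutive levels of the arrangement in coordinate $i$, and both $v_i$ and $w_i$ lie in $[a,b]$. Therefore $s_i \in [2a-b,\,2b-a]$, so it remains to verify the two bounds $2a-b \geq -\tfrac12$ and $2b-a \leq u_i + \tfrac12$.

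The involution $x_i \mapsto u_i - x_i$ leaves the arrangement invariant: it swaps the lower family $\{2^{j-1}\}$ with the upper family $\{u_i - 2^{j-1}\}$ and exchanges $0$ with $u_i$. It maps any cell to another cell, sending the interval $[a,b]$ to $[u_i-b,\,u_i-a]$. Applying the first inequality to the mirrored cell yields exactly the second inequality for the original, so it suffices to prove $b \leq 2a + \tfrac12$.

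Set $N := \lceil \log_2 u_i \rceil$ and $B := \{0,\tfrac12,1,2,4,\ldots,2^{N-1}\}$, so $B$ consists of $0$ together with all lower-family levels. The key observation is that for every $a \in [0,2^{N-1})$, the smallest element of $B$ strictly greater than $a$ is at most $2a + \tfrac12$; this is immediate by splitting into the ranges $[0,\tfrac12)$ and $[2^{k-1},2^k)$ for $k \geq 0$. Since $b$ cannot exceed the next $B$-level above $a$, this yields $b \leq 2a + \tfrac12$ whenever $a < 2^{N-1}$. For $a \geq 2^{N-1}$ we have $a \geq u_i/2$ (since $2^N \geq u_i$), and $b \leq u_i \leq 2a \leq 2a + \tfrac12$ follows directly.

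The main subtlety is that upper-family levels can sit strictly between two consecutive lower-family levels near the middle of $[0,u_i]$, so consecutive pairs in the combined arrangement are not all of the simple form $(2^{k-1},2^k)$. The argument handles this uniformly by bounding $b$ against the next element of $B$ above $a$, which controls the gap regardless of what top-family or bounding levels may lie in between.
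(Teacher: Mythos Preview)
Your proof is correct and follows the same coordinate-wise reflection argument as the paper's discussion preceding the lemma: reduce to one coordinate, bound $s_i=2w_i-v_i$ via the interval endpoints, and invoke the symmetry $x_i\mapsto u_i-x_i$ for the upper bound. Your treatment is in fact more careful than the paper's sketch, since you explicitly handle cells whose endpoints mix lower- and upper-family levels by bounding $b$ against the next element of $B=\{0,\tfrac12,1,2,\ldots,2^{N-1}\}$, a case the paper leaves implicit.
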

We use this observation to prove Theorem~\ref{thr:1}:
 \begin{proof}[Proof of Theorem~\ref{thr:1}] 
   The task of~\eqref{eq:1} is to find an integer point in the affine
   subspace defined by the system of equations $Ax = b$ that satisfies
   the bound constraints $0≤ x_i ≤ u_i$. We first partition the
   feasible region with the hyperplanes~\eqref{eq:2} as well as
   $x_i = 0$ and $x_i = u_i$ for each $i$. We then apply the
   approximate integer programming algorithm with approximation factor
   $2$ on each convex set $P_K = \{x \in \setR^n \mid Ax = b\}∩ K$ where $K$ ranges
   over all cells of the arrangement (see Figure~\ref{fig:cells}). In $2^{O(n)}$ time, the
   algorithm either finds an integer point in the convex set $C_K$
   that results from $P_K$ by scaling it with a factor of $2$ from its
   center of gravity, or it asserts that $P_K$ does not contain an
   integer point. Clearly, $C_K ⊆ \{x \in \setR^n \mid  Ax = b\}$ and if the
   algorithm returns an integer point $x^*$, then, by
   Lemma~\ref{lem:2}, this integer point also satisfies the bounds
   $0≤x_i ≤ u_i$.  The running time of the algorithm is equal to the number of cells times $2^{O(n)}$ which is  $2^{O(n)} \prod_{i=1}^n \log_2 (u_i+1)$. 
 \end{proof}
\begin{figure}
 \begin{center}
  \includegraphics[width=12cm]{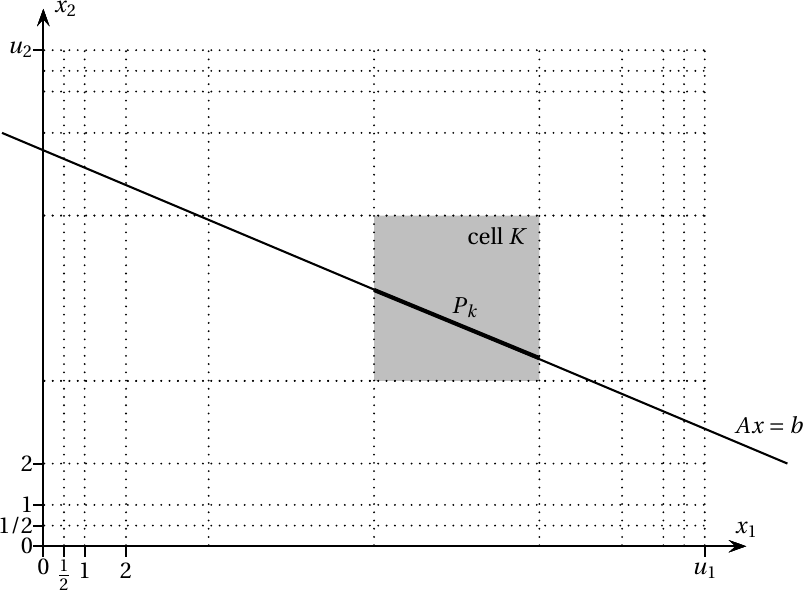}
\end{center}
  \caption{Visualization of the proof of Theorem~\ref{thr:1}.\label{fig:cells}}
\end{figure}

\subsection*{IPs in inequality form}

We can also use Theorem~\ref{thr:1} to solve integer linear programs in \emph{inequality form}. Here
the efficiency is strongly dependent on the number of inequalities.
\begin{theorem} \label{thm:IPinIneqForm}
  Let $A \in \setQ^{m \times n}$, $b \in \setQ^{m}$, $c \in \setQ^n$ and $u \in \setN_+^n$.
  Then the integer linear program
  $$  \max\big\{ \left<c,x\right> \mid Ax \leq b, \; 0 \leq x \leq u, \; x \in \setZ^n \big\} $$
  can be solved in time $n^{O(m)} \cdot (2\log(1+\Delta))^{O(n+m)}$ where $\Delta := \max\{ u_i \mid i=1,\ldots,n\}$.
\end{theorem}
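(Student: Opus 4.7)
The plan is to reduce the inequality-form IP to a small number of equation-standard-form feasibility instances and invoke Theorem~\ref{thr:1}. First I would turn optimization into feasibility via binary search on the objective. To keep the search range polynomial, I would apply Frank--Tardos (Theorem~\ref{thm:FrankTardos}) to $c$ with parameter $\Delta$, replacing $c$ by an integer vector $\tilde c$ with $\|\tilde c\|_{\infty} \leq 2^{O(n^3)}\Delta^{O(n^2)}$ that preserves the ordering of $\langle c,x\rangle$ on $\{0,\ldots,\Delta\}^n$. The optimum of $\langle \tilde c,x\rangle$ is then an integer in a range of size $2^{O(n^3)}\Delta^{O(n^2)}$, so binary search on $t$ requires only $\mathrm{poly}(n,\log\Delta)$ feasibility queries of the form
\[
 \{x \in \setZ^n : Ax \leq b,\ \langle \tilde c,x\rangle \geq t,\ 0 \leq x \leq u\}.
\]

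Each such instance has $m+1$ inequalities. I would apply Frank--Tardos separately to each inequality (with parameter $\Delta$), making every coefficient and right-hand side an integer of magnitude at most $M := 2^{O(n^3)}\Delta^{O(n^2)}$. Introducing one nonnegative slack $s_j$ per inequality yields an equivalent equation-form IP in $N=n+m+1$ variables, with bounds $0 \leq x_i \leq u_i$ and $0 \leq s_j \leq U_j$. Since $s_j = \tilde b_j - \langle \tilde a_j,x\rangle$ and $0 \leq x \leq u \leq \Delta\cdot\bm{1}$, we obtain
\[
 U_j \;\leq\; |\tilde b_j| + n\,\|\tilde a_j\|_{\infty}\,\Delta \;\leq\; 2^{O(n^3)}\Delta^{O(n^2)}.
\]

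By Theorem~\ref{thr:1}, each feasibility instance is then solvable in time
\[
 2^{O(n+m)} \cdot \prod_{i=1}^n \log_2(u_i+1) \cdot \prod_{j=0}^m \log_2(U_j+1).
\]
Using $\log_2(u_i+1) \leq \log_2(1+\Delta)$ and $\log_2(U_j+1) \leq O(n^3) + O(n^2\log_2\Delta) \leq O(n^3\log_2(1+\Delta))$ (valid since $\Delta \geq 1$ implies $\log_2(1+\Delta)\geq 1$), the product of logarithms is at most $(\log_2(1+\Delta))^n \cdot n^{O(m)}(\log_2(1+\Delta))^{O(m)}$. Combining with the $2^{O(n+m)}$ prefix and the $\mathrm{poly}(n,\log\Delta)$ binary-search overhead yields exactly $n^{O(m)}\cdot(2\log(1+\Delta))^{O(n+m)}$.

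The one point that requires care is the control of the slack-variable ranges: a naive reduction to equation form produces bounds depending on the bit length of $(A,b)$, which can be arbitrarily large and would destroy the claimed running time. Frank--Tardos is precisely the tool that replaces the original data by an equivalent instance (on the box $\{0,\ldots,\Delta\}^n$) whose coefficients are bounded by $2^{O(n^3)}\Delta^{O(n^2)}$, so that the resulting $\log_2(U_j+1)$ factors contribute only the $n^{O(m)}(\log(1+\Delta))^{O(m)}$ term that appears in the statement.
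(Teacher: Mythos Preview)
Your proposal is correct and follows essentially the same approach as the paper: reduce optimization to feasibility via binary search, apply Frank--Tardos to bound all coefficients by $2^{O(n^3)}\Delta^{O(n^2)}$, introduce $m+1$ slack variables with range at most $N=2^{O(n^3)}\Delta^{O(n^2)}$, and invoke Theorem~\ref{thr:1}. The only cosmetic difference is that the paper applies Frank--Tardos to $(c,\gamma,A,b)$ in one step rather than row by row, and you correctly emphasize that Frank--Tardos is the crucial ingredient for controlling the slack bounds (without it the running time would depend on the encoding length of $A,b$).
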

\begin{proof}
  Via binary search it suffices to solve the feasibility problem
  \begin{equation} \label{eq:inequalityIP2}
    \left<c,x\right> \geq \gamma, \; Ax \leq b, \; 0 \leq x \leq u, \; x \in \setZ^n 
  \end{equation}
  in the same claimed running time. We apply the result of Frank and Tardos (Theorem~\ref{thm:FrankTardos}) and
  replace $c,\gamma,A,b$ by integer-valued objects of bounded $\| \cdot \|_{\infty}$-norm so that the feasible region of \eqref{eq:inequalityIP2}
  remains the same. Hence we may indeed assume that $c \in \setZ^n$, $\gamma \in \setZ$, $A \in \setZ^{m \times n}$ and $b \in \setZ^m$ with $\|c\|_{\infty},|\gamma|,\|A\|_{\infty}, \|b\|_{\infty} \leq 2^{O(n^3)} \cdot \Delta^{O(n^2)}$.
  Any feasible solution $x$ to \eqref{eq:inequalityIP2} has a slack bounded by $\gamma-\left<c,x\right> \leq |\gamma| + \|c\|_{\infty} \cdot n \cdot \Delta \leq N$ where we may choose $N := 2^{O(n^3)} \Delta^{O(n^2)}$. Similarly $b_i-\left<A_i,x\right> \leq N$ for all $i \in [n]$.
  We can then introduce slack variables $y \in \setZ_{\geq 0}$ and $z \in \setZ_{\geq 0}^m$ and consider the system
  \begin{equation} \label{eq:equalityIP2}
    \begin{array}{lll}
    \left<c,x\right> + y = \gamma, &  Ax + z = b,  &  \\
    0 \leq x \leq u, & 0\leq y \leq N, & 0 \leq z_j \leq N \; \forall j \in [m],  \\
      (x,y,z) \in \setZ^{n+1+m} & &
                                   \end{array}
  \end{equation}
  in equality form which is feasible if and only if \eqref{eq:inequalityIP2} is feasible.   
 % Note that IP \eqref{eq:equalityIP2} has $n+m+1$ variables. 
  Then Theorem~\ref{thr:1} shows that such an integer linear program can be solved in time
  \[
    2^{O(n+m)} \cdot \Big(\prod_{i=1}^n \ln(1+u_i) \Big) \cdot (\ln(1+N))^{m+1}  \leq n^{O(m)} \cdot (2\log(1+\Delta))^{O(n+m)}.
  \]
\end{proof}

\subsection*{Subset sum and knapsack}
\label{sec:subset-sum-knapsack}

The \emph{subset-sum problem (with multiplicities)} is an integer program of the form~\eqref{eq:1} with one linear constraint.  
%Each integer program $\{ x ∈ℤ^d ： Ax≤b\}$ can be formulated as  subset-sum with multiplicities~\cite{kannan1983polynomial} in polynomial time, with $ n = O(m)$, where $m$ is the number of constraints in $Ax≤b$. 
 Polak and Rohwedder~\cite{polak2021knapsack} have shown that subset-sum with multiplicities --- that means $\sum_{i=1}^n x_iz_i=t, 0 \leq x_i \leq u_i \; \forall i \in [n], x \in \setZ^n$ --- can be solved in time $O(n +z_{\max}^{5/3})$ times a polylogarithmic factor where $z_{\max} := \max_{i=1,\ldots,n} z_i$. The algorithm of Frank and Tardos~\cite{frank1987application} (Theorem~\ref{thm:FrankTardos}) finds an equivalent instance in which $z_{\max}$ is bounded by $2^{O(n^3)} u_{\max}^{O(n^2)}$. All-together, if each multiplicity is bounded by a polynomial $p(n)$, then the state-of-the-art\footnote{At least at the time of the first submission of this paper.} for subset-sum with multiplicities is  straightforward enumeration resulting in a running time $n^{O(n)}$ which is the current best running time for integer programming. We can significantly improve the running time in this regime. This is a direct consequence of Theorem~\ref{thm:IPinIneqForm}. 

\begin{corollary}
  \label{co:1}
  The subset sum problem with multiplicities of the form $\sum_{i=1}^n x_iz_i=t, 0 \leq x \leq u, x \in \setZ^n$  can be solved in time $2^{O(n)} \cdot (\log(1+\|u\|_{\infty}))^n$. In particular if
   each multiplicity is bounded by a polynomial $p(n)$, then it can be solved in time $(\log n )^{O(n)}$. 
\end{corollary}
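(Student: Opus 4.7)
The plan is to observe that the subset sum problem with multiplicities is already an integer program in the standard equation form analyzed in Section~\ref{sec:ips-with-polynomial}, and then to apply Theorem~\ref{thr:1} essentially verbatim. Concretely, I view $\sum_{i=1}^n x_i z_i = t$, $0 \leq x_i \leq u_i$, $x \in \setZ^n$, as an instance of \eqref{eq:1} with the $1 \times n$ constraint matrix $A = (z_1,\ldots,z_n)$, right-hand side $b = t$, number of equations $m=1$, and the given upper bounds $u_i$. This requires no transformation of the input.

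Having identified the problem with \eqref{eq:1}, I would invoke Theorem~\ref{thr:1} directly to obtain a running time of $2^{O(n)} \prod_{i=1}^n \log_2(u_i+1)$. The only remaining step is a coarse estimate on the product: since every factor satisfies $\log_2(u_i+1) \leq \log_2(1+\|u\|_{\infty})$, one has
\[
\prod_{i=1}^n \log_2(u_i+1) \;\leq\; \bigl(\log_2(1+\|u\|_{\infty})\bigr)^n,
\]
yielding the claimed bound $2^{O(n)} \cdot (\log(1+\|u\|_{\infty}))^n$. For the ``in particular'' statement, I would substitute the polynomial upper bound $\|u\|_{\infty} \leq p(n)$, use $\log(1+p(n)) = O(\log n)$, and absorb the $2^{O(n)}$ factor into the base of $(\log n)^{O(n)}$.

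There is essentially no obstacle here, since all the work is already done by Theorem~\ref{thr:1}; the only thing to double check is that subset sum with multiplicities indeed sits in the format of \eqref{eq:1} with explicit upper bounds on all variables (which it does by hypothesis). Alternatively, the same conclusion can be obtained from Theorem~\ref{thm:IPinIneqForm} by rewriting the single equation as two inequalities, but this route is strictly less efficient and introduces an unnecessary factor depending on the slack, so I would stick with the direct application of Theorem~\ref{thr:1}.
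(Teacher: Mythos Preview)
Your argument is correct. You invoke Theorem~\ref{thr:1} directly, which is the natural move since the subset sum problem with multiplicities is literally an instance of~\eqref{eq:1} with $m=1$; the product bound $\prod_i \log_2(u_i+1)\le (\log_2(1+\|u\|_\infty))^n$ and the substitution $\|u\|_\infty\le p(n)$ then give both claims immediately.

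The paper, by contrast, phrases the corollary as a direct consequence of Theorem~\ref{thm:IPinIneqForm}. That route requires rewriting the single equation as two inequalities and passing through the Frank--Tardos preprocessing and slack-variable construction in the proof of Theorem~\ref{thm:IPinIneqForm}, and taken at face value it only yields a bound of the shape $n^{O(1)}\cdot(\log(1+\|u\|_\infty))^{O(n)}$ rather than the sharper $2^{O(n)}\cdot(\log(1+\|u\|_\infty))^{n}$ stated in the corollary. Your direct appeal to Theorem~\ref{thr:1} avoids this detour entirely and recovers the precise exponent~$n$ without further unpacking, so your approach is at least as good and arguably cleaner.
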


\noindent 
\emph{Knapsack} with multiplicities  is the following integer programming problem
\begin{equation}
  \label{eq:3} 
  \max\big\{\left<c, x\right> \mid x ∈ ℤ_{≥0}^n, \, \left<a,x\right> ≤ β, 0 ≤x≤u \big\},
\end{equation}
where $c, a,u ∈ ℤ_{≥0}^n$ are integer vectors.  Again, via the preprocessing algorithm of Frank and Tardos~\cite{frank1987application} (Theorem~\ref{thm:FrankTardos}) one can assume that $\|c\|_∞$ as well as $\|a\|_∞$ are bounded by $2^{O(n^3)} u_{\max}^{O(n^2)}$.  If each $u_i$ is bounded by a polynomial in the dimension, then the state-of-the-art for this problem is again straightforward enumeration which leads to a running time of $n^{O(n)}$. Also in this regime, we can significantly improve the running time
which is an immediate consequence of Theorem~\ref{thm:IPinIneqForm}.
\begin{corollary}
  \label{co:2}
  A knapsack problem~\eqref{eq:3} can be solved in time $2^{O(n)} \cdot (\log (1+\|u\|_{\infty}))^n$.
  In particular if $\|u\|_∞$ is bounded by a polynomial $p(n)$ in the dimension, it can be solved in time $(\log n)^{O(n)}$.  
\end{corollary}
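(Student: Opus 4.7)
The plan is to observe that the knapsack problem~\eqref{eq:3} is a special case of the integer program in inequality form treated in Theorem~\ref{thm:IPinIneqForm} with exactly $m = 1$ constraint, namely the single knapsack inequality $\langle a,x\rangle \leq \beta$ (the bounds $0 \leq x \leq u$ are already the box constraints handled separately in that theorem). So the proof is essentially a one-line invocation; my job is to read off what the running time specializes to when $m=1$.

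First, I would apply Theorem~\ref{thm:IPinIneqForm} with $A := a^T \in \setZ^{1 \times n}$, $b := \beta$, the given objective vector $c$, and the bound vector $u$. The theorem yields a running time of $n^{O(m)} \cdot (2\log(1+\Delta))^{O(n+m)}$ with $\Delta = \|u\|_\infty$ and $m=1$, which collapses to $n^{O(1)} \cdot (2\log(1+\|u\|_\infty))^{O(n)}$. Second, I would tidy this expression: the polynomial factor $n^{O(1)}$ and the constant base $2^{O(n)}$ coming from $(2\cdot \square)^{O(n)}$ are both absorbed into the prefactor $2^{O(n)}$, while $(\log(1+\|u\|_\infty))^{O(n)}$ can be rewritten as $(\log(1+\|u\|_\infty))^n$ at the cost of only inflating the hidden constant in $2^{O(n)}$. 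This gives the stated bound $2^{O(n)} \cdot (\log(1+\|u\|_\infty))^n$.

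For the ``in particular'' clause, I would just substitute $\|u\|_\infty \leq p(n)$, so $\log(1+\|u\|_\infty) = O(\log n)$, and conclude that the running time becomes $2^{O(n)} \cdot (O(\log n))^n = (\log n)^{O(n)}$, the constant base $O(\log n)^n$ swallowing the $2^{O(n)}$ factor since $\log n \to \infty$.

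The only thing to double-check is that the huge objective coefficients $c$ and threshold $\beta$ (which are not assumed polynomially bounded in the input) do not spoil the bound; but this is already dealt with inside the proof of Theorem~\ref{thm:IPinIneqForm} via the Frank--Tardos preprocessing (Theorem~\ref{thm:FrankTardos}) applied to $(c,a,\beta)$, so no extra argument is required here. Hence there is really no substantive obstacle and the corollary is immediate.
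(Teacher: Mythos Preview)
Your approach is exactly the paper's: Corollary~\ref{co:2} is declared an immediate consequence of Theorem~\ref{thm:IPinIneqForm}, and invoking that theorem with $m=1$ is precisely the intended argument. The ``in particular'' clause and your remark about Frank--Tardos are also fine.

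There is, however, one incorrect simplification. You write that $(\log(1+\|u\|_\infty))^{O(n)}$ can be turned into $(\log(1+\|u\|_\infty))^{n}$ ``at the cost of only inflating the hidden constant in $2^{O(n)}$.'' That algebraic step is false: if the exponent is $Cn$ with $C>1$, then
\[
\frac{(\log(1+\|u\|_\infty))^{Cn}}{(\log(1+\|u\|_\infty))^{n}} = (\log(1+\|u\|_\infty))^{(C-1)n},
\]
which is not bounded by any $D^n$ once $\|u\|_\infty$ is large. So the \emph{statement} of Theorem~\ref{thm:IPinIneqForm} alone only yields $2^{O(n)}(\log(1+\|u\|_\infty))^{O(n)}$, not the sharper exponent $n$ claimed in the corollary.

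The easy fix is to read one line further into the \emph{proof} of Theorem~\ref{thm:IPinIneqForm}, where the bound before the final coarsening is
\[
2^{O(n+m)} \cdot \Big(\prod_{i=1}^n \ln(1+u_i)\Big) \cdot (\ln(1+N))^{m+1},\qquad N = 2^{O(n^3)}\Delta^{O(n^2)}.
\]
For $m=1$ this is $2^{O(n)}\cdot(\log(1+\|u\|_\infty))^{n}\cdot(\ln(1+N))^{2}$, and the trailing factor $(\ln(1+N))^{2}=n^{O(1)}(\log(1+\|u\|_\infty))^{O(1)}$ is polynomial in the input encoding length, hence suppressed under the paper's standing convention (Section~\ref{sec:introduction}). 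This gives the exponent exactly~$n$, as stated.
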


\bibliographystyle{plain}
{\small 
\bibliography{newAlgosForIP,mybib,books}
}
\end{document}